    \newcommand{\dom}{\mbox{\rm dom}}
    \newcommand{\Awf}{\mathcal{A}}
    \newcommand{\Hwf}{\mathcal{H}}
    \newcommand{\Mwf}{\mathcal{M}}
    \newcommand{\Nwf}{\mathcal{N}}
    \newcommand{\Pwf}{\mathcal{P}}
    \newcommand{\Swf}{\mathcal{S}}
    \newcommand{\bfrak}{\mathfrak{b}}
    \newcommand{\cfrak}{\mathfrak{c}}
    \newcommand{\dfrak}{\mathfrak{d}}
    \newcommand{\menos}{\smallsetminus}
    \newcommand{\add}{\mbox{\rm add}}
    \newcommand{\cov}{\mbox{\rm cov}}
    \newcommand{\non}{\mbox{\rm non}}
    \newcommand{\cof}{\mbox{\rm cof}}
    \newcommand{\Aor}{\mathds{A}}
    \newcommand{\Bor}{\mathds{B}}
    \newcommand{\Cor}{\mathds{C}}
    \newcommand{\Dor}{\mathds{D}}
    \newcommand{\Eor}{\mathds{E}}
    \newcommand{\Loc}{\mathds{LOC}}
    \newcommand{\Por}{\mathds{P}}
    \newcommand{\Qor}{\mathds{Q}}
    \newcommand{\Ror}{\mathds{R}}
    \newcommand{\Sor}{\mathds{S}}
    \newcommand{\Qnm}{\dot{\mathds{Q}}}
    \newcommand{\Abf}{\mathbf{A}}
    \newcommand{\Rbf}{\mathbf{R}}
    \newcommand{\Lc}{\mathbf{Lc}}
    \newcommand{\cf}{\mbox{\rm cf}}
    \newcommand{\leqT}{\preceq_{\mathrm{T}}}
    \newcommand{\eqT}{\cong_{\mathrm{T}}}
    \newcommand{\Dbf}{\mathbf{D}}
\title{A note on ``Another ordering of the ten cardinal characteristics in
Cicho\'n's Diagram'' and further remarks}
\author{Diego Alejandro Mej\'{\i}a}
\affil{Faculty of Science, Shizuoka University.}
\date{}
\newcommand{\Addresses}{{
  \bigskip
  \footnotesize

  Faculty of Science\par 
  Shizuoka University\par 
  836 Ohya, Suruga-ku, Shizuoka 422-8529\par
  JAPAN\par 
  E--mail address: \texttt{diego.mejia@shizuoka.ac.jp}
}}
\begin{document}

\makeatletter
\def\@roman#1{\romannumeral #1}
\makeatother

\newcounter{enuAlph}
\renewcommand{\theenuAlph}{\Alph{enuAlph}}

\theoremstyle{plain}
  \newtheorem{theorem}{Theorem}[section]
  \newtheorem{corollary}[theorem]{Corollary}
  \newtheorem{lemma}[theorem]{Lemma}
  \newtheorem{prop}[theorem]{Proposition}
  \newtheorem{claim}[theorem]{Claim}
  \newtheorem{exer}[theorem]{Exercise}
  \newtheorem{teorema}[enuAlph]{Theorem}
\theoremstyle{definition}
  \newtheorem{definition}[theorem]{Definition}
  \newtheorem{example}[theorem]{Example}
  \newtheorem{remark}[theorem]{Remark}
  \newtheorem{context}[theorem]{Context}
  \newtheorem{question}[theorem]{Question}
  \newtheorem{problem}[theorem]{Problem}
  \newtheorem{notation}[theorem]{Notation}

\maketitle

\newcommand{\la}{\langle}
\newcommand{\ra}{\rangle}
\newcommand{\id}{\mathrm{id}}
\newcommand{\sig}{\boldsymbol{\Sigma}}
\newcommand{\cosig}{\boldsymbol{\Pi}}

\newcommand{\leqdi}{\preceq_{\mathrm{di}}}
\newcommand{\eqdi}{\approx_{\mathrm{di}}}
\newcommand{\leqcdi}{\preceq_{\mathrm{cdi}}}
\newcommand{\eqcdi}{\approx_{\mathrm{cdi}}}
\newcommand{\eqPc}{\approx_{\mathrm{P}}}

\newcommand{\sbf}{\mathbf{s}}
\newcommand{\tbf}{\mathbf{t}}
\newcommand{\matit}{\mathbf{m}}
\newcommand{\Fr}{\mathrm{Fr}}

\newcommand{\Slm}{\mathbf{aLc}^*}
\newcommand{\Mg}{\mathbf{Mg}}

\newcommand{\ls}{\mathrm{ls}}
\newcommand{\trk}{\mathrm{trk}}
\newcommand{\Leb}{\mathrm{Leb}}
\newcommand{\Ct}{\mathbf{Ct}}
\newcommand{\Id}{\mathbf{Id}}

\begin{abstract}
   In this note, we relax the hypothesis of the main results in Kellner--Shelah--T\v{a}nasie's \emph{Another ordering of the ten cardinal characteristics in Cicho\'n's diagram}.
\end{abstract}

\section{Introduction}\label{SecIntro}

This work belongs to the framework of consistency results where several cardinal invariants are pairwise different, in particular those in Cicho\'n's diagram (Figure~\ref{cichon}). Though we assume that the reader is familiar with this diagram, a characterization of its cardinal characteristics is presented in Section~\ref{SecCardinv}. It is well-known that this diagram is \emph{complete} in the sense that no other inequality can be proved between two cardinal invariants there. See e.g.~\cite{BJ} for details and original references.

\begin{figure}
\begin{center}
\includegraphics{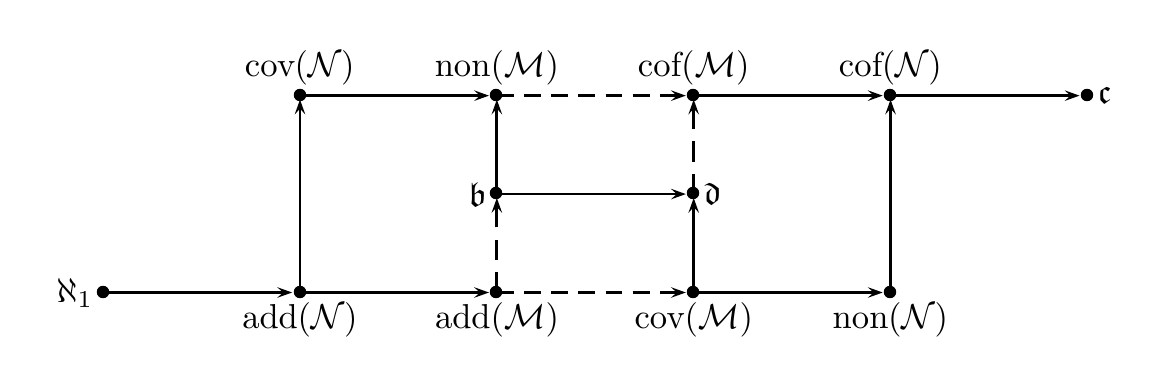}
\caption{Cicho\'n's diagram. The arrows represent $\leq$. The dashed arrows mean $\add(\Mwf)=\min\{\bfrak,\cov(\Mwf)\}$ and $\cof(\Mwf)=\max\{\dfrak,\cov(\Mwf)\}$.}
\label{cichon}
\end{center}
\end{figure}

Resent progress in this framework appears in~\cite{GMS,FFMM,mejiavert,KTT,GKS,KST,BCM,GKMS}. In particular, \cite{KTT} deals with the method of Boolean ultrapowers, used in~\cite{GKS} to separate all cardinals in Cicho\'n's diagram using four strongly compact cardinals. This result was improved in~\cite{BCM} where it is shown that three strongly compact cardinals suffice. In~\cite{GKMS} more classical cardinal invariants are separated simultaneously.

This survey focuses on Kellner--Shelah--T\v{a}nasie's work~\cite{KST}, where the consistency of an alternative order of the ten values of Cicho\'n's diagram is proved. Concretely, they show:
\begin{enumerate}[(A)]
  \item Assuming GCH, if $\lambda_1<\lambda_2<\lambda_3<\lambda_4<\lambda_5$ are regular $\aleph_1$-inaccessible cardinals,\footnote{A cardinal $\lambda$ is \emph{$\theta$-inaccessible} if $\mu^\nu<\lambda$ for any $\mu<\lambda$ and $\nu<\theta$.} then there is a ccc poset that forces $\add(\Nwf)=\lambda_1$, $\bfrak=\lambda_2$, $\cov(\Nwf)=\lambda_3$, $\non(\Mwf)=\lambda_4$, and $\cov(\Mwf)=\cfrak=\lambda_5$.
  \item Assuming GCH, if $\kappa_9<\lambda_1<\kappa_8<\lambda_2<\kappa_7<\lambda_3<\kappa_6<\lambda_4
      <\lambda_5<\lambda_6<\lambda_7<\lambda_8<\lambda_9$ are regular cardinals such that $\lambda_2$ is a successor cardinal, $\lambda_i$ is $\aleph_1$-inaccessible for $i=1,\ldots,5$ and $\kappa_j$ is strongly compact for $j=6,\ldots,9$, then there is a ccc poset that forces
      \begin{multline*}
    \add(\Nwf)=\lambda_1,\ \mathfrak{b}=\lambda_2,\ \cov(\Nwf)=\lambda_3,\ \non(\Mwf)=\lambda_4,\\
        \cov(\Mwf)=\lambda_5,\ \non(\Nwf)=\lambda_6,\ \mathfrak{d}=\lambda_7,\ \cof(\Nwf)=\lambda_8,\text{\ and }\mathfrak{c}=\lambda_9.
\end{multline*}
\end{enumerate}

In this work, we show how to relax many of the hypothesis on (A) and (B), for instance, replace GCH by very specific cardinal arithmetic conditions, show that the hypothesis ``$\aleph_1$-inaccessible'' is only required for $\lambda_3$, and that $\lambda_2$ could also be a limit (regular) cardinal in (B). To be precise, we prove:

\begin{teorema}\label{mainnoba}
   Let $\lambda_1\leq\lambda_2\leq\lambda_3\leq\lambda_4$ be regular uncountable cardinals, and let $\lambda_5=\lambda_5^{<\lambda_4}$ be a cardinal. Assume that either 
   \begin{enumerate}[(i)]
     \item $\lambda_2=\lambda_3$, or
     \item $\lambda_2=\lambda_2^{<\lambda_2}<\lambda_3$, $\lambda_3$ is $\aleph_1$-inaccessible, and either $\lambda_4=\lambda_5$ or $\lambda_4^{\aleph_0}<\lambda_5$.
   \end{enumerate} 
   Then there is a ccc poset forcing $\add(\Nwf)=\lambda_1$, $\bfrak=\lambda_2$, $\cov(\Nwf)=\lambda_3$, $\non(\Mwf)=\lambda_4$, and $\cov(\Mwf)=\cfrak=\lambda_5$.
\end{teorema}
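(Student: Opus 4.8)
The plan is to reconstruct the finite-support (FS) ccc iteration behind~\cite{KST}, phrased in the relational-systems/Tukey language of~\cite{GMS,mejiavert,BCM,GKMS}, and to bookkeep precisely which cardinal-arithmetic facts each ingredient consumes. Via Bartoszy\'nski-type characterizations, Cicho\'n's diagram becomes a diagram of Tukey connections: $\add(\Nwf)=\blc$, $\cof(\Nwf)=\dlc$ for the localization system $\Lc$; $\bfrak=\bfrak(\Dbf)$, $\dfrak=\dfrak(\Dbf)$ for the dominating system $\Dbf$; $\non(\Mwf)=\bfrak(\Ed)$, $\cov(\Mwf)=\dfrak(\Ed)$ for the eventually-different system $\Ed$; and $\non(\Nwf)$, $\cov(\Nwf)$ are the $\bfrak$ and $\dfrak$ of a suitable instance $\Cv$ of anti-localization. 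One then only has to arrange the five values on the left-hand side: the Cicho\'n inequalities force $\dfrak=\non(\Nwf)=\cof(\Mwf)=\cof(\Nwf)=\lambda_5$ and $\add(\Mwf)=\min\{\bfrak,\cov(\Mwf)\}=\lambda_2$, and one checks the prescribed assignment is Cicho\'n-consistent (the only genuine choice being $\bfrak=\lambda_2\le\cov(\Nwf)=\lambda_3$, which is legitimate since $\bfrak$ and $\cov(\Nwf)$ are $\le$-incomparable).

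\textbf{The poset.} Build a FS ccc iteration $\bar\Por=\la\Por_\xi,\Qnm_\xi:\xi<\pi\ra$, with $\pi$ an ordinal of size $\lambda_5$ built from the $\lambda_i$, whose iterands (chosen by a suitable bookkeeping) are names for: amoeba-for-measure / localization forcing $\Loc$ (to lift $\add(\Nwf)$), Hechler forcing $\Dor$ (to lift $\bfrak$), random forcing $\Bor$ restricted to complete subalgebras of size $<\lambda_3$ (to lift $\cov(\Nwf)$), and eventually-different forcing $\Eor$ (to lift $\non(\Mwf)$); the Cohen reals added at limit stages will handle $\cov(\Mwf)=\cfrak=\lambda_5$. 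As in~\cite{KST}, each of the first four types is forced along a $\leqT$-increasing, $\lambda_i$-directed family of $\lambda_i$-many cofinal sub-iterations, while \emph{every} iterand is simultaneously required to be ``good'', in the preservation-theoretic sense of~\cite{GMS,mejiavert,BCM}, for the systems among $\Lc,\Dbf,\Cv,\Ed$ that it does not target. In case~(i), where $\lambda_2=\lambda_3$, the $\bfrak$- and $\cov(\Nwf)$-blocks are amalgamated into a single $\lambda_2$-directed system interleaving $\Dor$ and $\Bor$, so no special property of $\lambda_3$ is used; in case~(ii) the $\cov(\Nwf)$-block genuinely ``lives at $\lambda_3$'' and is assembled from random subalgebras closed under countable sequences.

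\textbf{Computing the invariants.} The lower bounds $\add(\Nwf)\ge\lambda_1$, $\bfrak\ge\lambda_2$, $\cov(\Nwf)\ge\lambda_3$, $\non(\Mwf)\ge\lambda_4$ come from genericity: any family of size $<\lambda_i$ is, by ccc and $\lambda_i$-directedness, captured by a proper sub-iteration of the $i$-th system, and the next generic real of the corresponding type defeats it. One gets $\cov(\Mwf)=\cfrak=\lambda_5$ from the length-$\lambda_5$ FS iteration (limit-stage Cohen reals give $\cov(\Mwf)\ge\lambda_5$, successor stages give $\cfrak\ge\lambda_5$) together with the nice-name count $\cfrak\le\lambda_5^{\aleph_0}=\lambda_5$, valid because $|\pi|\le\lambda_5$ and each iterand has size $\le\lambda_5$ (the random ones, size $<\lambda_3$). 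The upper bounds $\add(\Nwf)\le\lambda_1$, $\bfrak\le\lambda_2$, $\cov(\Nwf)\le\lambda_3$, $\non(\Mwf)\le\lambda_4$ are the heart of the argument: one invokes the preservation theory of~\cite{GMS,mejiavert,BCM,GKMS} to show $\bar\Por$ is respectively $\lambda_1$-$\Lc$-good, $\lambda_2$-$\Dbf$-good, $\lambda_3$-$\Cv$-good and $\lambda_4$-$\Ed$-good, so that the $\lambda_i$-sized family of generic witnesses from the $i$-th block stays unbounded in the relevant relational system --- equivalently, that cardinal is not raised past $\lambda_i$. This works because each building block is ``good'' for the three systems it does not target (e.g.\ random forcing is $\omega^\omega$-bounding, hence cannot raise $\bfrak$; Cohen forcing is good for $\Lc$, $\Dbf$ and $\Cv$, so the Cohen reals raising $\cov(\Mwf)$ to $\lambda_5$ do no collateral damage), so each left-side invariant can only rise inside its own block.

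\textbf{Where the arithmetic enters, and the main obstacle.} The hypothesis $\lambda_5=\lambda_5^{<\lambda_4}$ is consumed by the name-count ($\lambda_5^{\aleph_0}=\lambda_5$, which also yields $2^{\aleph_0}\le\lambda_5$) and, together with ``$\lambda_4=\lambda_5$ or $\lambda_4^{\aleph_0}<\lambda_5$'', by keeping the $\non(\Mwf)=\lambda_4$ witness alive through the length-$\lambda_5$ tail (when $\lambda_4=\lambda_5$ one merges the $\non(\Mwf)$-block with the tail). The hypothesis $\lambda_2=\lambda_2^{<\lambda_2}$ in case~(ii) is what makes the $\lambda_2$-directed Hechler system manageable with only $\lambda_2$ ``small pieces'' (needed for the $\lambda_2$-$\Dbf$-good preservation and for Knaster-ness of the composite poset); in case~(i) it is absorbed by the amalgamated $\lambda_2$-system. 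Finally, $\aleph_1$-inaccessibility of $\lambda_3$ --- equivalently $\mu^{\aleph_0}<\lambda_3$ for every $\mu<\lambda_3$ --- is exactly what the $\lambda_3$-$\Cv$-good preservation needs: the random subalgebras handling $\cov(\Nwf)$ must be closed under countable sequences yet stay of size $<\lambda_3$, and the surviving $\Cv$-covering family of size $\lambda_3$ must dominate only $<\lambda_3$-many ``$\aleph_0$-indexed threats''. This explains why inaccessibility is needed \emph{only} at $\lambda_3$: the systems for $\add(\Nwf)$, $\bfrak$, $\non(\Mwf)$ are built from forcings of size $\le\mu$ (not $\mu^{\aleph_0}$) over index sets of size $\mu$, and ``small unless explicitly lifted'' handles their upper bounds without arithmetic. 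I expect the bulk of the work --- and the main obstacle --- to be re-establishing this $\lambda_3$-$\Cv$-good preservation along the FS iteration under the weak hypothesis ``$\lambda_3$ is $\aleph_1$-inaccessible'' in place of GCH; once that is in place, the remaining relaxations (dropping inaccessibility at $\lambda_1,\lambda_2,\lambda_4,\lambda_5$ and allowing case~(i)) follow from the same bookkeeping.
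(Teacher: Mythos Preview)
Your outline correctly identifies the overall architecture (FS ccc iteration, bookkeeping with $\Loc$, $\Dor$, $\Bor$, an $\Eor$-type forcing, Cohen reals at limits) and correctly locates where $\aleph_1$-inaccessibility of $\lambda_3$ is spent. But you misidentify the main obstacle, and this leads to a genuine gap.

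The hard step is not the $\lambda_3$-preservation; it is keeping $\bfrak\le\lambda_2$. You assert the iteration is ``$\lambda_2$-$\Dbf$-good'' and that ``each building block is good for the three systems it does not target'', but you never justify this for the eventually-different iterands. These live over submodels of size approaching $\lambda_4>\lambda_2$, so the small-poset route is closed; your parenthetical handles random forcing (which is $\omega^\omega$-bounding, hence $\Dbf$-good), but the standard $\sigma$-centered $\Eor$ is \emph{not} known to be $\Dbf$-good, and $\sigma$-centeredness alone cannot help (Hechler is $\sigma$-centered and adds a dominating real). This is exactly the difficulty the paper inherits from~\cite{KST}: their solution is to replace $\Eor$ by the tree forcing $\tilde\Eor$, which admits \emph{FAM-limits}, and to build the iteration as a ``good iteration candidate'' so that the whole poset is $\lambda_2$-$\Fr$-Knaster; it is the $\Fr$-Knaster property (Theorem~\ref{FrKnasterpresunb}), not $\Dbf$-goodness, that preserves the strongly $\lambda_2$-$\Dbf$-unbounded witnesses. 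By contrast, the $\lambda_3$-step you flag as central is routine: all iterands other than $\tilde\Eor$ have size $<\lambda_3$, and $\tilde\Eor$ is $(\rho_*,\pi_*)$-linked hence $\Slm(b_*,\rho_*)$-good, so $\lambda_3$-goodness holds; $\aleph_1$-inaccessibility is used only to keep the random subposets $\Por'_\xi$ (closed under the candidate construction) below $\lambda_3$.

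Relatedly, you misattribute the role of $\lambda_2^{<\lambda_2}=\lambda_2$. It is not consumed by the Hechler bookkeeping. The FAM-limit machinery rests on the Engelking--Kar\l owicz theorem, which forces the side condition $\lambda_5\le 2^{\lambda_2}$ on the construction; the paper then removes this constraint by a preliminary ${<}\lambda_2$-closed, $\lambda_2$-cc forcing $\mathrm{Fn}_{<\lambda_2}(\lambda_5\times\lambda_2,2)$ and an Easton-type argument, and it is \emph{here} that $\lambda_2^{<\lambda_2}=\lambda_2$ is used. None of this appears in your proposal. (Your treatment of case~(i) is essentially fine: with $\lambda_2=\lambda_3$ the random block is absorbed and the candidate/FAM machinery is unnecessary, as the paper itself remarks.)
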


\begin{teorema}\label{mainba}
  Assume that $\kappa_9<\lambda_1<\kappa_8<\lambda_2<\kappa_7<\lambda_3<\kappa_6<\lambda_4
      \leq\lambda_5<\lambda_6<\lambda_7<\lambda_8<\lambda_9$ such that
      \begin{enumerate}[(i)]
        \item $\lambda_1,\ldots,\lambda_5$ are as in the hypothesis of Theorem~\ref{mainnoba};
        \item for $j=6,\ldots,9$, $\kappa_j$ is strongly compact and $\lambda_j^{\kappa_j}=\lambda_j$; and
        \item for $j=6,7,8$, $\lambda_j$ is regular.
      \end{enumerate}
      then there is a ccc poset that forces
      \begin{multline*}
    \add(\Nwf)=\lambda_1,\ \mathfrak{b}=\lambda_2,\ \cov(\Nwf)=\lambda_3,\ \non(\Mwf)=\lambda_4,\\
        \cov(\Mwf)=\lambda_5,\ \non(\Nwf)=\lambda_6,\ \mathfrak{d}=\lambda_7,\ \cof(\Nwf)=\lambda_8,\text{\ and }\mathfrak{c}=\lambda_9.
\end{multline*}
\end{teorema}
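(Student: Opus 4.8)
The plan is to feed the poset produced by Theorem~\ref{mainnoba} into the iterated Boolean‑ultrapower construction of \cite{KST} (which originates in \cite{GKS}), and to verify that the cardinal‑arithmetic hypotheses (ii) and (iii) are exactly what is needed to run that construction where \cite{KST} uses GCH.

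First I would invoke Theorem~\ref{mainnoba}: by (i), $\lambda_1,\dots,\lambda_5$ satisfy its hypotheses, so it yields a ccc poset $\mathbb{P}$ forcing $\add(\Nwf)=\lambda_1$, $\bfrak=\lambda_2$, $\cov(\Nwf)=\lambda_3$, $\non(\Mwf)=\lambda_4$ and $\cov(\Mwf)=\cfrak=\lambda_5$. Since $\cfrak=\lambda_5$ in $V^{\mathbb{P}}$, the Cicho\'n inequalities $\cov(\Mwf)\le\dfrak\le\cof(\Mwf)\le\cof(\Nwf)\le\cfrak$ force $\dfrak=\cof(\Mwf)=\cof(\Nwf)=\lambda_5$ there as well, and the construction also gives $\non(\Nwf)=\lambda_5$ (or, if one prefers, the $j=6$ step below fixes $\non(\Nwf)$ directly). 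So in the base model all four ``right'' values $\non(\Nwf),\dfrak,\cof(\Nwf),\cfrak$ equal $\lambda_5$, and it remains to spread them out to $\lambda_6<\lambda_7<\lambda_8<\lambda_9$ without moving $\lambda_1,\dots,\lambda_5$.

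Next I would build, exactly as in \cite{KST}, a chain of Boolean ultrapowers $\mathbb{P}=:\mathbb{P}^{(5)}$, $\mathbb{P}^{(6)}$, $\mathbb{P}^{(7)}$, $\mathbb{P}^{(8)}$, $\mathbb{P}^{(9)}=:\mathbb{Q}$, where $\mathbb{P}^{(j)}$ is the Boolean ultrapower of $\mathbb{P}^{(j-1)}$ by a fine $\kappa_j$‑complete ultrafilter $U_j$ on $\mathcal{P}_{\kappa_j}(\lambda_j)$ (available because $\kappa_j$ is strongly compact), the steps being processed for $j=6,7,8,9$, i.e.\ using the largest compact $\kappa_6$ first and the smallest $\kappa_9$ last. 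Every step keeps the ccc property, since $\kappa_j>\aleph_1$: an $\aleph_1$‑sized antichain in $\mathbb{P}^{(j)}$ would, by $\kappa_j$‑completeness of $U_j$, reflect to one in $\mathbb{P}^{(j-1)}$. By the analysis of \cite{KST}, passing from $\mathbb{P}^{(j-1)}$ to $\mathbb{P}^{(j)}$ preserves every Cicho\'n characteristic separated at earlier steps and raises the $j$‑th one -- $\non(\Nwf)$ if $j=6$, $\dfrak$ if $j=7$, $\cof(\Nwf)$ if $j=8$, $\cfrak$ if $j=9$ -- up to exactly $\lambda_j$, carrying $\cfrak$ up to the new level at each step. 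Hence $\mathbb{Q}$ is a ccc poset forcing the full nine‑value configuration.

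The only things not taken verbatim from \cite{KST} concern the replacement of GCH. The control on the size of $\mathbb{P}^{(j)}$, hence on $\cfrak$, and the guarantee that the $j$‑th invariant lands on $\lambda_j$ rather than higher, reduce for $j=6$ to $\lambda_5^{<\lambda_4}=\lambda_5$ (since $\kappa_6<\lambda_4$), which is part of (i), and for $j=7,8,9$ to the equalities $\lambda_{j-1}^{\kappa_j}=\lambda_{j-1}$ and $\lambda_j^{\kappa_j}=\lambda_j$; the first of these follows from $\kappa_j<\kappa_{j-1}$ together with the hypothesis $\lambda_{j-1}^{\kappa_{j-1}}=\lambda_{j-1}$ of (ii) -- so the full interleaving $\kappa_9<\kappa_8<\kappa_7<\kappa_6$ is used -- and the second is (ii) itself. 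The genuinely delicate point, and the one I expect to be the main obstacle, is the preservation half: that the $j$‑th ultrapower really keeps $\lambda_1,\dots,\lambda_{j-1}$ fixed, including values well above $\kappa_j$ and, at the last step, $\add(\Nwf)=\lambda_1>\kappa_9$. In \cite{KST} this rests on preservation lemmas for Boolean ultrapowers of the specific forcings in play, where GCH and the $\aleph_1$‑inaccessibility of $\lambda_1,\lambda_2,\lambda_4,\lambda_5$ are invoked; the bulk of the work is to pass through each such invocation and check that it is already subsumed by $\lambda_j^{\kappa_j}=\lambda_j$ ($j=6,\dots,9$), by the regularity of $\lambda_6,\lambda_7,\lambda_8$ in (iii), and by the arithmetic below $\lambda_5$ that Theorem~\ref{mainnoba} already supplies -- in particular, that $\lambda_j^{\kappa_j}=\lambda_j$ does everything $\aleph_1$‑inaccessibility was used for, so that only $\lambda_3$ need remain $\aleph_1$‑inaccessible and $\lambda_2$ may be a regular limit.
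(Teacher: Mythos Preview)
Your overall architecture is the same as the paper's: start from the ccc poset $\Por$ supplied by Theorem~\ref{mainnoba} and apply four Boolean ultrapowers along $\kappa_6>\kappa_7>\kappa_8>\kappa_9$ with targets $\lambda_6,\ldots,\lambda_9$, as in~\cite{KST}. Your bookkeeping of the cardinal arithmetic (why $\lambda_j^{\kappa_j}=\lambda_j$ replaces GCH for the size estimates) is essentially right.

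There is, however, a genuine gap in the preservation discussion, and it is precisely the point the paper isolates. The values $\lambda_1,\ldots,\lambda_4$ are witnessed in $V^{\Por}$ in two ways: \emph{strongly $\theta$-$\Rbf_i$-unbounded families} (giving $\bfrak(\Rbf_i)\leq\theta\leq\dfrak(\Rbf_i)$) and \emph{$\lambda_i$-$\Rbf_i$-dominating families} (giving $\lambda_i\leq\bfrak(\Rbf_i)$ and $\dfrak(\Rbf_i)\leq\lambda_5$). The first kind survives Boolean ultrapowers by the known \cite{KTT,GKS} lemma (Lemma~\ref{BUPpres}(b) here). For the second kind, \cite{KST} does \emph{not} preserve ordinary $\lambda$-dominating families; it instead builds and preserves the much stronger directed-system property $\mathrm{COB}(\lambda_i,\lambda_5)$. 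But Theorem~\ref{mainnoba} (via Theorems~\ref{main1} and~\ref{main2}) only records plain $\lambda_i$-$\Rbf_i$-dominating families in item~(c); it never claims $\mathrm{COB}$. So ``apply \cite{KST} verbatim'' does not go through: you would either have to redo the entire iteration to thread $\mathrm{COB}$ through it, or supply a new preservation lemma for dominating families.

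The paper takes the second route. Its specific contribution for this theorem is Lemma~\ref{BUPpre}(g)/Lemma~\ref{BUPpres}(c): if $\Por$ adds a $\lambda$-$\Abf$-dominating family, then $j(\Por)$ still adds one (of controlled size), with two cases according to whether $\lambda<\kappa$ or $\lambda>\kappa$. This is what lets the four ultrapowers keep each left-side value pinned without ever invoking $\mathrm{COB}$, and it is exactly the ``delicate point'' you flag but leave unspecified. Once you cite this lemma, your outline becomes the paper's proof.
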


These improvements does not demand much changes in the original proofs, actually, the only dramatic modification is the construction of small posets, this to avoid the hypothesis ``$\aleph_1$-inaccessible''. In relation to Theorem~\ref{mainba} (or the method of Boolean ultrapowers) we show how to weaken the notion $\mathrm{COB}(\theta,\lambda)$ used in \cite{KTT,GKS,KST} to the more natural ``$\theta$-dominating family of size $\lambda$'' (see Definition~\ref{Defrelsys}(7)), i.e., that Boolean ultrapowers preserves such dominating families (by just modifying its size in some cases). Details are presented in Lemmas~\ref{BUPpre}(g) and~\ref{BUPpres}(c).

\begin{remark}
  Any strict inequality after $\lambda_5$ can be replaced by $=$ arbitrarily, and in such case the corresponding strongly compact cardinal can be omitted. For example, if we let $\lambda_{j-1}=\lambda_{j}$ ($j=6,7,8,9$) then the strongly compact cardinal $\kappa_j$ can be omitted, while $\lambda_{9-j}\leq\lambda_{10-j}$ is allowed (here $\lambda_0=\aleph_1$).
  
  On the other hand, in~\cite{GKMS} it is proved that the large gaps between the cardinals on the left side of Cicho\'n's diagram can be reduced, while separating on the right side. However, the strongly compact cardinals are still required for this purpose.
\end{remark}

This paper is structured as follows. In Section~\ref{SecCardinv} we review general notions related to preservation of unbounded families. In Section~\ref{SecFAM} we summarize some technicalities used in~\cite{KST} without going too deep into details, e.g., we manage to avoid introducing FAM-limits, which is the core of the tools in that reference. In Section~\ref{SecMain} we prove Theorem~\ref{mainnoba} and in Section~\ref{SecBA} we present further remarks on the Boolean ultrapower method towards Theorem~\ref{mainba}.

\section{Relational systems and preservation}\label{SecCardinv}

In many cases, cardinal invariants of the continuum are defined through \emph{relational systems}.

\begin{definition}\label{Defrelsys}
  A \emph{relational system} is a triplet $\Abf=\la X,Y,\sqsubset\ra$ where $\sqsubset$ is a relation contained in $X\times Y$. For $x\in X$ and $y\in Y$, $x\sqsubset y$ is often read \emph{$y$ $\sqsubset$-dominates $x$}.

  Fix a cardinal $\theta$.
  \begin{enumerate}[(1)]
    \item A family $D\subseteq Y$ is \emph{$\Abf$-dominating} iff every member of $X$ is $\sqsubset$-dominated by some member of $D$.
    \item A family $F\subseteq X$ is \emph{$\Abf$-unbounded} if there is \underline{no} real in $Y$ that $\sqsubset$-dominates every member of $F$.
    \item The relational system $\Abf^\perp:=\la Y,X,\not\sqsupset\ra$ is referred to as the \emph{dual of $\Abf$}. Note that $(\Abf^\perp)^\perp=\Abf$ and that $F\subseteq X$ is $\Abf$-unbounded iff it is $\Abf^\perp$-dominating.
    \item For a set $M$ say that $y\in Y$ is \emph{$\Abf$-dominating over $M$} if $x\sqsubset y$ for all $x\in X\cap M$.
    \item Say that $x\in X$ is \emph{$\Abf$-unbounded over $M$} if it is $\Abf^\perp$-dominating over $M$, that is, $x\not\sqsubset y$ for all $y\in Y\cap M$.
    \item The cardinal $\bfrak(\Abf)$ denotes the least size of an $\Abf$-unbounded family and $\dfrak(\Abf)$ is the least size of an $\Abf$-dominating family. Note that $\bfrak(\Abf^\perp)=\dfrak(\Abf)$ and $\dfrak(\Abf^\perp)=\bfrak(\Abf)$.
    \item A family $D\subseteq Y$ is \emph{$\theta$-$\Abf$-dominating} if, for every $E\in[X]^{<\theta}$, there is some $\Abf$-dominating $y\in D$  over $E$.
    \item A family $F\subseteq X$ is \emph{$\theta$-$\Abf$-unbounded} if it is $\theta$-$\Abf^\perp$-dominating, that is, for any $H\in[Y]^{<\theta}$ there is some $\Abf$-unbounded $x\in F$ over $H$.
    \item A family $D\subseteq Y$ is \emph{strongly $\theta$-$\Abf$-dominating} if $|D|\geq\theta$ and, for every $x\in X$, $|\{y\in D:x\not\sqsubset y\}|<\theta$.
    \item A family $F\subseteq X$ is \emph{strongly $\theta$-$\Abf$-unbounded} if it is strongly-$\theta$-$\Abf^\perp$-dominating, that is, $|F|\geq\theta$ and, for every $y\in Y$, $|\{x\in F:x\sqsubset y\}|<\theta$.
  \end{enumerate}
\end{definition}

\begin{remark}\label{RemRelSys}
  Fix a relational system $\Abf=\la X,Y,\sqsubset\ra$.
  \begin{enumerate}[(1)]
    \item The cardinal invariants $\bfrak(\Abf)$ and $\dfrak(\Abf)$ may not always exist. Concretely, $\bfrak(\Abf)$ does not exist iff $\dfrak(\Abf)=1$. Dually, $\dfrak(\Abf)$ does not exists iff $\bfrak(\Abf)=1$.
    \item Any subset of $Y$ is $\Abf$-dominating iff it is $2$-$\Abf$-dominating. Likewise, $\Abf$-unbounded is equivalent to $2$-$\Abf$-unbounded.
    \item If $\theta\leq\theta'$ are cardinals, then any $\theta'$-$\Abf$-dominating family is $\theta$-$\Abf$-dominating. Likewise for unbounded families.
    \item If $\theta\geq 2$, then any $\theta$-$\Abf$-dominating family is $\Abf$-dominating.\footnote{Any subset of $Y$ is $0$-$\Abf$-dominating; and $D\subseteq Y$ is $1$-$\Abf$-dominating iff $D\neq\emptyset$.} Moreover, if $D\subseteq Y$ is $\theta$-$\Abf$-dominating family then $\dfrak(\Abf)\leq|D|$ and $\theta\leq\bfrak(\Abf)$. Similar statements hold for unbounded families, e.g., if $F\subseteq X$ is $\theta$-$\Abf$-unbounded then $\bfrak(\Abf)\leq|F|$ and $\theta\leq\dfrak(\Abf)$.
    \item Any strongly $\theta$-$\Abf$-dominating family is $\Abf$-dominating. Likewise for unbounded.
    \item If $\theta$ is regular and $D\subseteq Y$ is a strongly $\theta$-$\Abf$-dominating family, then $D$ is $|D|$-$\Abf$-dominating, so $\dfrak(\Abf)\leq|D|\leq\bfrak(\Abf)$. Similarly, if $F\subseteq X$ is strongly $\theta$-$\Abf$-unbounded then it is $|F|$-$\Abf$-unbounded, which implies $\bfrak(\Abf)\leq|F|\leq\dfrak(\Abf)$.
  \end{enumerate}
\end{remark}

Inequalities between cardinal invariants are often proved using the \emph{Tukey order} between relational systems. If $\Abf=\la X,Y,\sqsubset\ra$ and $\Abf'=\la X',Y',\sqsubset'\ra$ are relational systems, $\Abf\leqT\Abf'$ means that there are two maps $\varphi:X\to X'$ and $\psi:Y'\to Y$ such that, for any $x\in X$ and $y'\in Y'$, $\varphi(x)\sqsubset' y'$ implies $x\sqsubset\psi(y')$. In this case, the $\psi$-image of any $\Abf'$-dominating set is $\Abf$-dominating, and the $\varphi$-image of any $\Abf$-unbounded set is $\Abf'$-unbounded, thus $\bfrak(\Abf')\leq\bfrak(\Abf)$ and $\dfrak(\Abf)\leq\dfrak(\Abf')$. Say that $\Abf$ and $\Abf'$ are \emph{Tukey equivalent}, denoted by $\Abf\eqT\Abf'$, if $\Abf\leqT\Abf'$ and $\Abf'\leqT\Abf$.

Before we present examples, we review the preservation theory of unbounded families presented in \cite[Sect. 4]{CM}. This a generalization of Judah and Shelah's~\cite{JS} and Brendle's~\cite{Br} preservation theory.

\begin{definition}\label{DefgPrs}
Say that $\Rbf=\langle X,Y,\sqsubset\rangle$ is a \emph{generalized Polish relational system (gPrs)} if
\begin{enumerate}[(I)]
\item $X$ is a Perfect Polish space,
\item $Y=\bigcup_{e\in \Omega}Y_e$ where $\Omega$ is a non-empty set and, for some Polish space $Z$, $Y_e$ is non-empty and analytic in $Z$ for all $e\in \Omega$, and
\item $\sqsubset=\bigcup_{n<\omega}\sqsubset_{n}$ where $\langle\sqsubset_{n}: n<\omega\rangle$  is some increasing sequence of closed subsets of $X\times Z$ such that, for any $n<\omega$ and for any $y\in Y$,
$(\sqsubset_{n})^{y}=\{x\in X:x\sqsubset_{n}y \}$ is closed nowhere dense.
\end{enumerate}

If $|\Omega|=1$, we just say that $\Rbf$ is a \emph{Polish relational system (Prs)}.
\end{definition}

For the rest of this section, fix a gPrs $\Rbf=\langle X,Y,\sqsubset\rangle$.

\begin{remark}
  $\la X,\Mwf(X),\in\ra\leqT\Rbf$, so $\bfrak(\Rbf)\leq\non(\Mwf)$ and $\cov(\Mwf)\leq\dfrak(\Rbf)$.
\end{remark}

\begin{definition}
Let $\theta$ be a cardinal.
A poset $\Por$ is \textit{$\theta$-$\Rbf$-good} if, for any $\Por$-name $\dot{h}$ for a member of $Y$, there exists  a non-empty $H\subseteq Y$ (in the ground model) of size $<\theta$ such that, for any $x\in X$, if $x$ is $\Rbf$-unbounded over  $H$ then $\Vdash x\not\not\sqsubset \dot{h}$.

Say that $\Por$ is \textit{$\Rbf$-good} if it is $\aleph_1$-$\Rbf$-good.
\end{definition}

Note that $\theta<\theta'$ implies that any $\theta$-$\Rbf$-good poset is $\theta'$-$\Rbf$-good. Also, if $\Por\lessdot\Qor$ and $\Qor$ is $\theta$-$\Rbf$-good, then $\Por$ is $\theta$-$\Rbf$-good.

The notion of goodness is practical to preserve special types of unbounded families in generic extensions.

\begin{lemma}[{\cite[Lemma 4.7]{CM}}]\label{presunb}
Let $\theta$ be a regular cardinal, $\lambda\geq \theta$ a cardinal and let $\Por$ be a $\theta$-$\Rbf$-good poset.
\begin{enumerate}[(a)]
\item If $F\subseteq X$ is $\lambda$-$\Rbf$-unbounded, then $\Por$ forces that it is  $\dot{\lambda}'$-$\Rbf$-unbounded where, in the $\Por$-extension, $\dot{\lambda}'$ is the smallest cardinal $\geq\lambda$.
\item If $\cf(\lambda)\geq\theta$ and $F\subseteq X$ is strongly $\lambda$-$\Rbf$-unbounded then $\Vdash$``if $\lambda$ is a cardinal then $F$ is strongly $\lambda$-$\Rbf$-unbounded".
\item If $\dfrak(\Rbf)\geq\lambda$ then $\Por$ forces that $\dfrak(\Rbf)\geq\dot{\lambda}'$.
\end{enumerate}
\end{lemma}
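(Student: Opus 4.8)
The plan is to deduce all three items from the single \emph{localization} feature of $\theta$-$\Rbf$-goodness: for every $\Por$-name $\dot y$ for a member of $Y$ there is a ground-model $H\subseteq Y$ with $|H|<\theta$ such that every $x\in X$ which is $\Rbf$-unbounded over $H$ is forced to satisfy $x\not\sqsubset\dot y$; equivalently, the extension-set $\{x\in X:x\sqsubset\dot y\}$ is always included in the ground-model set $\bigcup_{y\in H}\{x\in X:x\sqsubset y\}$, a union of fewer than $\theta$ pieces. Item (c) will then be the instance $F=X$ of item (a): by definition $\dfrak(\Rbf)\geq\lambda$ says exactly that $X$, viewed as a family of potential witnesses, is $\lambda$-$\Rbf$-unbounded, so (a) gives $\Vdash$``$X$ is $\dot\lambda'$-$\Rbf$-unbounded'', i.e.\ $\Vdash\dfrak(\Rbf)\geq\dot\lambda'$.

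For (a), I would fix $p\in\Por$ and a name $\dot H$ with $p\Vdash$``$\dot H\subseteq Y$ and $|\dot H|<\dot\lambda'$''. Since $\dot\lambda'$ is the least extension-cardinal $\geq\lambda$, every extension-cardinal below it is an ordinal $<\lambda$, so $p$ forces $\dot H$ to admit an enumeration of length $<\lambda$; a density argument below $p$ then produces $q\leq p$ deciding a fixed ground-model ordinal $\mu_0<\lambda$ together with names $\dot h_\alpha$ ($\alpha<\mu_0$) such that $q\Vdash\dot H=\{\dot h_\alpha:\alpha<\mu_0\}$, and after the usual mixing each $\dot h_\alpha$ may be assumed to be a genuine name for a member of $Y$. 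Applying goodness to each $\dot h_\alpha$ gives $H_\alpha$ with $|H_\alpha|<\theta$; set $H^*=\bigcup_{\alpha<\mu_0}H_\alpha$. The regularity of $\theta$ is used precisely once, to ensure $|H^*|<\lambda$: if $\theta<\lambda$ then $|H^*|\leq\max(\mu_0,\theta)<\lambda$, and if $\theta=\lambda$ then $\lambda$ is regular, so the union of $\mu_0<\lambda$ sets each of size $<\lambda$ has size $<\lambda$. As $F$ is $\lambda$-$\Rbf$-unbounded there is $x\in F$ that is $\Rbf$-unbounded over $H^*$, hence over each $H_\alpha$, whence $\Vdash x\not\sqsubset\dot h_\alpha$ for all $\alpha<\mu_0$ and so $q\Vdash$``$x\in F$ is $\Rbf$-unbounded over $\dot H$''. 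Since $p$ and $\dot H$ were arbitrary, $\Por$ forces that $F$ is $\dot\lambda'$-$\Rbf$-unbounded.

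For (b), I would first observe that an injection of $\lambda$ into $F$ existing in $V$ persists, so $\Por$ forces ``if $\lambda$ is a cardinal then $|F|\geq\lambda$''. Next, given a name $\dot y$ for a member of $Y$, I would bound the extension-size of $\{x\in F:x\sqsubset\dot y\}$ using the set $H$ from goodness applied to $\dot y$: this set is contained in the ground-model set $\bigcup_{y\in H}\{x\in F:x\sqsubset y\}$, a union of $|H|<\theta\leq\cf(\lambda)$ sets each of $V$-size $<\lambda$ by strong $\lambda$-$\Rbf$-unboundedness of $F$ in $V$; since $\cf(\lambda)>|H|$, this union has $V$-size $<\lambda$, hence extension-size $<\lambda$. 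Therefore $\Por$ forces that, if $\lambda$ remains a cardinal, $F$ is strongly $\lambda$-$\Rbf$-unbounded.

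The only step I expect to require genuine care is the bookkeeping at the start of (a): arguing that, below a suitable condition, a name for a set of extension-size $<\dot\lambda'$ can be reorganized as an enumeration of a fixed ground-model length $\mu_0<\lambda$, and that the resulting $\dot h_\alpha$ become honest names for members of $Y$ to which goodness applies, all without disturbing the conclusion $\Vdash x\not\sqsubset\dot h_\alpha$. Everything else reduces to unwinding Definition~\ref{Defrelsys} and to the elementary cardinal arithmetic highlighted above.
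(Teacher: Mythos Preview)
Your argument is correct and is precisely the standard proof of this lemma; the paper itself does not give a proof but merely cites \cite[Lemma~4.7]{CM}, and your write-up reproduces that argument faithfully. The only cosmetic point worth tightening is the cardinal arithmetic in~(a): the bound $|H^*|<\lambda$ follows cleanly from the two cases $\mu_0<\theta$ (where regularity of $\theta$ gives $|H^*|<\theta\leq\lambda$) and $\mu_0\geq\theta$ (where $|H^*|\leq|\mu_0|\cdot\theta=|\mu_0|<\lambda$), exactly as you indicate.
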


As a first general example, every small poset is always good.

\begin{lemma}[{\cite[Lemma 2.7]{CM}}]\label{smallgood}
  If $\theta$ is a regular cardinal then any poset of size $<\theta$ is $\theta$-$\Rbf$-good. In particular, Cohen forcing $\Cor$ is $\Rbf$-good.
\end{lemma}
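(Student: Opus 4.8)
The plan is: given a $\Por$-name $\dot h$ for a member of $Y$, build a family $H\subseteq Y$ of size $<\theta$ by collecting, for each condition of $\Por$ and each $n<\omega$, a single ground-model real of $Y$ that $\sqsubset$-dominates the closed nowhere dense set of all $x\in X$ that the condition forces to be $\sqsubset_n$-below $\dot h$; the non-trivial content is extracting such a ground-model dominating real by absoluteness.

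First I would reformulate the goal. Since $\sqsubset=\bigcup_{n<\omega}\sqsubset_n$, the failure of $\Vdash x\not\sqsubset\dot h$ is equivalent, after strengthening a condition, to the existence of $p\in\Por$ and $n<\omega$ with $p\Vdash x\sqsubset_n\dot h$. So for $p\in\Por$ and $n<\omega$ put
\[
 D_{p,n}:=\{x\in X:p\Vdash x\sqsubset_n\dot h\}.
\]
The first step is to check each $D_{p,n}$ is closed nowhere dense in $X$. Closedness uses that $\sqsubset_n$ is closed and $X$ is sequential: if $x_k\to x$ with $p\Vdash x_k\sqsubset_n\dot h$ for all $k$, then in any extension $V[G]$ with $p\in G$ the pairs $(x_k,\dot h^G)$ lie in the closed set $\sqsubset_n$ and converge to $(x,\dot h^G)$, so $(x,\dot h^G)\in\sqsubset_n$; thus $p\Vdash x\sqsubset_n\dot h$. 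Nowhere density holds because for $G\ni p$ we get $D_{p,n}\subseteq(\sqsubset_n)^{\dot h^G}$ with $\dot h^G\in Y$, and ``$(\sqsubset_n)^y$ is nowhere dense'' holds for every $y\in Y$ — a $\cosig^1_1$ fact, hence absolute between $V$ and $V[G]$.

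The main step is to find, for each relevant pair, a single $y\in Y$ in the ground model with $D_{p,n}\subseteq(\sqsubset)^{y}$; the generic real $\dot h^G$ does this in $V[G]$, but a ground-model witness is what we need. To this end I fix a maximal antichain $A\subseteq\Por$ of conditions deciding the $\Omega$-index, i.e.\ each $q\in A$ forces $\dot h\in Y_{e_q}$ for some $e_q\in\Omega$; such $A$ exists by density and $|A|\leq|\Por|<\theta$. Now for $q\in A$, $p\leq q$ and $n<\omega$, the statement ``$\exists y\in Y_{e_q}\ (D_{p,n}\subseteq(\sqsubset_n)^{y})$'' is $\sig^1_1$ with parameters in $V$: $Y_{e_q}$ is analytic in $Z$, and using a countable dense subset $\{d_i:i<\omega\}$ of the closed set $D_{p,n}$ (available in $V$) the containment $D_{p,n}\subseteq(\sqsubset_n)^y$ amounts to the closed condition $\forall i\,(d_i,y)\in\sqsubset_n$ on $y$. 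This statement is true in $V[G]$ for $G\ni p$ (witnessed by $\dot h^G$, using $p\leq q$), hence true in $V$ by Mostowski absoluteness; pick a witness $y^{q,p,n}\in Y$. Let $H:=\{y^{q,p,n}:q\in A,\ p\leq q,\ n<\omega\}$, which is non-empty and of size $\leq|A|\cdot|\Por|\cdot\aleph_0<\theta$.

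Finally I would verify $H$ works. If $x\in X$ is $\Rbf$-unbounded over $H$, then $x\not\sqsubset y^{q,p,n}$ for all allowed $q,p,n$; since $D_{p,n}\subseteq(\sqsubset_n)^{y^{q,p,n}}\subseteq(\sqsubset)^{y^{q,p,n}}$, this yields $x\notin D_{p,n}$, i.e.\ $p\not\Vdash x\sqsubset_n\dot h$, for every $q\in A$, $p\leq q$ and $n<\omega$. Were $\Vdash x\not\sqsubset\dot h$ to fail, we would get $r\in\Por$ and $n$ with $r\Vdash x\sqsubset_n\dot h$; by maximality $r$ is compatible with some $q\in A$, and a common extension $p\leq r,q$ satisfies $p\leq q$ and $p\Vdash x\sqsubset_n\dot h$, i.e.\ $x\in D_{p,n}$ — a contradiction. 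Hence $\Vdash x\not\sqsubset\dot h$ and $\Por$ is $\theta$-$\Rbf$-good. The boundary case $\theta=\aleph_0$ (finite $\Por$) is immediate because a finite forcing adds no reals and $\dot h$ then takes only finitely many values, all in $Y\cap V$; and since Cohen forcing is countable, applying the lemma with $\theta=\aleph_1$ shows it is $\Rbf$-good. I expect the main obstacle to be exactly the absoluteness extraction of the ground-model dominating real together with the antichain bookkeeping needed to cope with conditions lying strictly below those that decide the $\Omega$-index; for a Prs (where $Y$ itself is analytic) the antichain is unnecessary and the argument is shorter.
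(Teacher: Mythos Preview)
The paper does not give its own proof of this lemma; it simply quotes the result from \cite[Lemma~2.7]{CM}. Your argument is correct and is essentially the standard one: defining the closed sets $D_{p,n}$, using $\Sigma^1_1$-absoluteness to pull a witness $y^{q,p,n}\in Y_{e_q}$ back to the ground model, and bookkeeping with a maximal antichain $A$ deciding the $\Omega$-index is exactly how the gPrs case is handled (and, as you observe, for a Prs the antichain step is superfluous); your separate treatment of the finite case $\theta=\aleph_0$ is also correct and necessary, since there the main construction would only give a countable $H$.
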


If $\theta$ is an uncountable regular cardinal then any FS support iteration of $\theta$-$\Rbf$-good $\theta$-cc posets is again $\theta$-$\Rbf$-good (and $\theta$-cc). Hence, according to the previous lemma, they preserve $\lambda$-$\Rbf$-unbounded families for any $\lambda\geq\theta$, and strongly $\lambda$-$\Rbf$-unbounded families for any $\lambda$ with $\cf(\lambda)\geq\theta$. Such unbounded families can be added using Cohen reals.

\begin{lemma}[{\cite[Lemma 4.15]{CM}}]\label{CohenUnb}
If $\nu$ is a cardinal with uncountable cofinality and $\mathbb{P}_{\nu}=\langle\mathbb{P}_{\alpha},\dot{\mathbb{Q}}_{\alpha}\rangle_{\alpha<\nu}$ is a FS iteration of non-trivial $\cf(\nu)$-cc posets, then $\Por_{\nu}$ adds a strongly $\nu$-$\Rbf$-unbounded family of size $\nu$.
\end{lemma}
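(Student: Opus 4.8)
The plan is to prove Lemma~\ref{CohenUnb} by showing directly that the set of Cohen reals added at the coordinates of the iteration, viewed as points of $X$ via a suitable fixed Borel coding, forms a strongly $\nu$-$\Rbf$-unbounded family. First I would recall that, since $\Rbf$ is a gPrs, the relation $\sqsubset=\bigcup_{n<\omega}\sqsubset_n$ is an increasing union of closed sets whose vertical sections $(\sqsubset_n)^y$ are closed nowhere dense in $X$; hence for each $y\in Y$ and each $n$, a Cohen real over a model containing (a code for) $y$ and $\sqsubset_n$ avoids $(\sqsubset_n)^y$, so a Cohen real over such a model $x$ satisfies $x\not\sqsubset y$. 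This is the standard ``Cohen reals are $\Rbf$-unbounded over the ground model'' fact, and it is exactly what makes clause~(II)--(III) of Definition~\ref{DefgPrs} useful here.

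Next I would set up the bookkeeping. Let $\la c_\alpha:\alpha<\nu\ra$ enumerate the Cohen reals added by $\dot{\Qor}_\alpha$ (more precisely, the generic reals obtained from the nontrivial factors; nontriviality guarantees a genuine Cohen real is present below each condition, using that a nontrivial poset adds a Cohen-generic element of $2^\omega$ after passing to a countable subalgebra — or one argues with the standard fact that any nontrivial forcing of size continuum adds such a real; for the combinatorial core only that each $c_\alpha$ is Cohen over $V^{\Por_\alpha}$ is needed). Identify each $c_\alpha$ with a point $x_\alpha\in X$ through a fixed homeomorphism (or continuous open surjection) from $2^\omega$ to $X$, which exists as $X$ is perfect Polish; this preserves genericity. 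Set $F=\{x_\alpha:\alpha<\nu\}$. Since $\cf(\nu)>\omega$ and the iteration has the $\cf(\nu)$-cc, any $\Por_\nu$-name $\dot y$ for a member of $Y=\bigcup_{e\in\Omega}Y_e$ — together with a name for the relevant index $e$ and a Borel code for the analytic set $Y_e$ inside $Z$ — is decided by a subset of the coordinates of size $<\cf(\nu)$, hence is (equivalent to) a $\Por_\beta$-name for some $\beta<\nu$; moreover by a $\Delta$-system / cofinality argument there are $\nu$-many $\alpha<\nu$ with $\alpha\geq\beta$, and for each such $\alpha$ the real $x_\alpha$ is Cohen over $V^{\Por_\beta}\ni \dot y[G_\beta]$, so by the first paragraph $\Vdash x_\alpha\not\sqsubset \dot y$. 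Thus $\{x\in F: x\sqsubset y\}$ has size $<\nu$ for every $y\in Y$ in the extension, and $|F|=\nu$ since the $c_\alpha$ are pairwise distinct. This is precisely the definition of strongly $\nu$-$\Rbf$-unbounded, so $\Por_\nu$ forces it.

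The step I expect to be the main obstacle is the reflection argument that every name $\dot y$ for a member of $Y$ depends only on $<\cf(\nu)$ coordinates: one must handle the index set $\Omega$, which need not be a set of reals, so the ``name'' really packages a choice of $e\in\Omega$ together with a name for a point of the analytic set $Y_e\subseteq Z$. The clean way is to note that $\Vdash \dot y\in Y_e$ for some $e$ forced by a condition $p$, then work below $p$ with the $\cf(\nu)$-cc to reflect the $Z$-valued name, using that $Y_e$ analytic means membership is $\boldsymbol{\Sigma}^1_1$ and absolute; one also wants that the chosen reflecting ordinal $\beta$ can be taken below $\nu$ even when $\nu$ is not regular, which is where $\cf(\nu)>\omega$ is used together with the fact that a name of ``size'' $<\cf(\nu)$ lives on a bounded set of coordinates. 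Everything else — that Cohen reals avoid closed nowhere dense sets coded in the ground model, that the transfer along $2^\omega\to X$ preserves genericity, and the cardinality count $|F|=\nu$ — is routine, so I would state those briefly and spend the detail on the reflection.
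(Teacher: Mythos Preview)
The paper does not give its own proof of this lemma; it simply cites \cite[Lemma~4.15]{CM}. Your overall strategy --- produce $\nu$-many Cohen reals along the iteration, reflect each name $\dot y$ to some $\Por_\beta$ with $\beta<\nu$ via the $\cf(\nu)$-cc, and conclude that all but ${<}\nu$ many of the Cohen reals are $\Rbf$-unbounded over $y$ --- is exactly the standard one, and the parts you flag as routine (absoluteness for the analytic $Y_e$, avoidance of closed nowhere dense sets by Cohen reals, the cardinality count) are indeed routine.

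There is, however, a genuine gap in where you obtain the Cohen reals. You claim each non-trivial iterand $\dot\Qor_\alpha$ adds a Cohen real over $V^{\Por_\alpha}$, justifying this by ``passing to a countable subalgebra'' or by ``any nontrivial forcing of size continuum adds such a real''. Both justifications fail: random forcing is non-trivial and of size $\cfrak$, yet adds no Cohen real; a countable subalgebra of a complete Boolean algebra need not be \emph{completely} embedded, so the generic it carries need not be Cohen over the ground model. Since the lemma must cover iterations whose factors are, say, random forcing or $\tilde\Eor$ (as in this very paper), this is not a cosmetic issue.

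The standard fix, and the one behind the cited reference, is to harvest the Cohen reals from the \emph{limit stages of countable cofinality} of the FS iteration rather than from the individual iterands. If $\gamma<\nu$ has $\cf(\gamma)=\omega$ and all $\dot\Qor_\alpha$ are non-trivial, then $\Por_\gamma$, being the direct limit along a countable cofinal sequence of proper extensions, adds a real $c_\gamma$ that is Cohen over $V^{\Por_\beta}$ for every $\beta<\gamma$. Since $\cf(\nu)>\omega$, the set of such $\gamma$ is cofinal in $\nu$ and has size $\nu$. Your remaining argument then goes through verbatim with $F=\{c_\gamma:\gamma<\nu,\ \cf(\gamma)=\omega\}$: any $\dot y$ reflects to some $\Por_\beta$, and every $c_\gamma$ with $\gamma>\beta$ satisfies $c_\gamma\not\sqsubset y$.
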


\begin{theorem}[{\cite[Thm. 4.16]{CM}}]\label{ItPres}
Let $\theta$ be an uncountable regular cardinal, $\delta\geq\theta$ an ordinal, and let $\mathbb{P}_{\delta}=\langle\mathbb{P}_{\alpha},\dot{\mathbb{Q}}_{\alpha}\rangle_{\alpha<\delta}$ be a FS iteration such that, for each $\alpha<\delta$, $\dot{\Qor}_{\alpha}$ is a $\Por_{\alpha}$-name of a  non-trivial $\theta$-$\Rbf$-good $\theta$-cc poset. Then:
\begin{enumerate}[(a)]
    \item For any cardinal $\nu\in[\theta,\delta]$ with $\cf(\nu)\geq\theta$, $\Por_\nu$ adds a strongly $\nu$-$\Rbf$-unbounded family of size $\nu$ which is still strongly $\nu$-$\Rbf$-unbounded in the $\Por_\delta$-extension.
    \item For any cardinal $\lambda\in[\theta,\delta]$, $\Por_\lambda$ adds a $\lambda$-$\Rbf$-unbounded family of size $\lambda$ which is still $\lambda$-$\Rbf$-unbounded in the $\Por_\delta$-extension.
    \item $\mathbb{P}_{\delta}$ forces that $\bfrak(\Rbf)\leq\theta$ and  $|\delta|\leq\dfrak(\Rbf)$.
\end{enumerate}
\end{theorem}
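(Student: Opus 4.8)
The plan is to prove Theorem~\ref{ItPres} by combining Lemma~\ref{smallgood}, Lemma~\ref{CohenUnb}, and the preservation Lemma~\ref{presunb}, using the basic fact that finite support (FS) iterations of $\theta$-cc posets are $\theta$-cc when $\theta$ is uncountable regular, and that FS iterations of $\theta$-$\Rbf$-good posets are again $\theta$-$\Rbf$-good. I would begin by recording this last fact explicitly: if $\theta$ is an uncountable regular cardinal and each iterand $\dot{\Qor}_\alpha$ is (forced to be) $\theta$-$\Rbf$-good and $\theta$-cc, then $\Por_\delta$ is $\theta$-$\Rbf$-good. For the $\theta$-cc part this is the standard fact about FS iterations; for goodness, given a $\Por_\delta$-name $\dot h$ for a member of $Y$, one uses the $\theta$-cc to find, for each condition, a set of $<\theta$ many possible values decided along a maximal antichain, reduces to an initial segment $\Por_\alpha$ with $\alpha<\delta$ of size considerations handled by regularity of $\theta$, and then applies goodness of the relevant iterand together with an inductive hypothesis to assemble the witnessing set $H$ of size $<\theta$. (Since this is essentially \cite[Thm.~4.16]{CM}, I would cite it rather than reprove it in detail, but I would at least indicate this structure.)

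With that in hand, the three clauses follow quickly. For (a): fix a cardinal $\nu\in[\theta,\delta]$ with $\cf(\nu)\geq\theta\geq\aleph_1$, so $\nu$ has uncountable cofinality. Since each $\dot{\Qor}_\alpha$ is non-trivial and $\theta$-cc, hence $\cf(\nu)$-cc, Lemma~\ref{CohenUnb} applies to $\Por_\nu=\langle\Por_\alpha,\dot{\Qor}_\alpha\rangle_{\alpha<\nu}$ and yields a strongly $\nu$-$\Rbf$-unbounded family $F$ of size $\nu$ in the $\Por_\nu$-extension. Now view the tail of the iteration as a single FS iteration over $V^{\Por_\nu}$ whose iterands are $\theta$-$\Rbf$-good and $\theta$-cc; by the preservation fact just recorded, this tail poset is $\theta$-$\Rbf$-good. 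Since $\cf(\nu)\geq\theta$ and $\nu$ remains a cardinal (the $\theta$-cc preserves cofinalities $\geq\theta$, and $\nu$ is still a cardinal), Lemma~\ref{presunb}(b) gives that $F$ is still strongly $\nu$-$\Rbf$-unbounded in the $\Por_\delta$-extension. For (b): the argument is the same but simpler — no cofinality restriction is needed. Take a cardinal $\lambda\in[\theta,\delta]$. The head $\Por_\lambda$ is a FS iteration of non-trivial $\theta$-cc posets; if $\cf(\lambda)\geq\theta$ one gets a strongly, hence $\lambda$-$\Rbf$-unbounded family of size $\lambda$ from (a), and if $\cf(\lambda)<\theta$ then in particular $\lambda>\cf(\lambda)\geq\aleph_1$ is singular; in either case one must produce a $\lambda$-$\Rbf$-unbounded family of size $\lambda$ at stage $\lambda$. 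The clean way is: for each $\alpha<\lambda$ the Cohen real added at stage $\alpha$ (since $\dot{\Qor}_\alpha$ is non-trivial, $\Por_{\alpha+1}$ adds a Cohen real over $V^{\Por_\alpha}$) gives a real $c_\alpha\in X$ that is $\Rbf$-unbounded over $V^{\Por_\alpha}$ (recall Cohen forcing is $\Rbf$-good, Lemma~\ref{smallgood}, and a Cohen real over a model is $\Rbf$-unbounded over it since each $(\sqsubset_n)^y$ is closed nowhere dense); then $F=\{c_\alpha:\alpha<\lambda\}$ has size $\lambda$ and is $\lambda$-$\Rbf$-unbounded in $V^{\Por_\lambda}$ because any $H\in[Y]^{<\lambda}$ appears in some $V^{\Por_\alpha}$ with $\alpha<\lambda$ (using $\theta$-cc and $\cf(\lambda)$ large enough, or more carefully a reflection argument when $\lambda$ is singular — one uses that a set of $<\lambda$ names is supported on $<\lambda$ coordinates and these are bounded below $\lambda$ when $\cf(\lambda)>\aleph_0$). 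Then apply Lemma~\ref{presunb}(a) to the $\theta$-$\Rbf$-good tail: since $\lambda\geq\theta$ is a cardinal and stays one, $\dot\lambda'=\lambda$ and $F$ remains $\lambda$-$\Rbf$-unbounded in $V^{\Por_\delta}$.

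For (c): by (b) applied with $\lambda=\theta$ there is in $V^{\Por_\delta}$ a $\theta$-$\Rbf$-unbounded family of size $\theta$, and by Remark~\ref{RemRelSys}(4) this gives $\bfrak(\Rbf)\leq\theta$. For the lower bound $|\delta|\leq\dfrak(\Rbf)$: any $\Rbf$-dominating family $D$ in $V^{\Por_\delta}$ of size $<|\delta|$ is, by $\theta$-cc together with the fact that $\delta$ has cofinality... well, more simply, each element of $D$ is a name supported on $<\theta\leq|\delta|$ coordinates, so if $|D|<|\delta|$ then $\bigcup\{\mathrm{supp}\}$ has size $<|\delta|$, hence is not cofinal in $\delta$, and one picks $\alpha<\delta$ above it; the Cohen real $c_\alpha$ is $\Rbf$-unbounded over $V^{\Por_\alpha}\supseteq$ (the model containing $D$), contradicting that $D$ is dominating. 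Hence $\dfrak(\Rbf)\geq|\delta|$.

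\textbf{Main obstacle.} The genuinely delicate point is the reflection step: arguing that a family of $<\lambda$ (resp.\ $<|\delta|$) objects in the final model already lives in some proper initial segment $V^{\Por_\alpha}$. For $\lambda$ (or $\delta$) of uncountable cofinality this is the routine ``$\theta$-cc iterations have the property that names for reals depend on countably many coordinates, and $<\lambda$-many such are bounded'' argument; the one case needing care is when $\lambda$ is singular with $\cf(\lambda)<\theta$, where one must be slightly more careful about how $\lambda$-$\Rbf$-unboundedness (a statement quantifying over $H\in[Y]^{<\lambda}$, and $[Y]^{<\lambda}$ can include sets of size $\geq\cf(\lambda)$) is witnessed — but since each such $H$ still has size $<\lambda$ and hence its $<\lambda$ names are supported below $\lambda$ provided $\cf(\lambda)>\aleph_0$, the argument goes through. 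I would simply invoke \cite[Thm.~4.16]{CM} for the full bookkeeping rather than reproduce it, and highlight only how the hypotheses (non-triviality for the Cohen reals, $\theta$-cc and goodness for preservation along the tail) feed into Lemmas~\ref{CohenUnb} and~\ref{presunb}.
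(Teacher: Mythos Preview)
Your overall strategy is the standard one and is exactly what the citation to \cite[Thm.~4.16]{CM} points to: check that the whole iteration and every tail is $\theta$-$\Rbf$-good and $\theta$-cc, produce the unbounded families at the initial segment via Lemma~\ref{CohenUnb}, and preserve them along the tail via Lemma~\ref{presunb}. The paper itself gives no proof of this theorem, so there is nothing to compare against beyond this.

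There is, however, a genuine gap in your handling of (b) and (c). In (b) you write ``$\lambda>\cf(\lambda)\geq\aleph_1$'' when $\cf(\lambda)<\theta$, and later ``the argument goes through \dots\ provided $\cf(\lambda)>\aleph_0$''. But nothing in the hypotheses excludes $\cf(\lambda)=\omega$ (take $\lambda=\aleph_\omega$), and in that case your reflection step fails: a family $H\in[Y]^{<\lambda}$ of size, say, $\aleph_5$ has names whose supports together occupy only $\aleph_5$ coordinates, yet $\aleph_5$ coordinates can easily be cofinal in $\aleph_\omega$, so there is no $\alpha<\lambda$ with $H\subseteq V^{\Por_\alpha}$. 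The same problem afflicts your direct argument for $\dfrak(\Rbf)\geq|\delta|$ in (c): a set of size $<|\delta|$ need not be bounded in $\delta$ when $\cf(\delta)<|\delta|$.

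The clean fix is to derive (b) for singular $\lambda$ from (a) rather than by a direct support-counting argument. Since a singular $\lambda$ is a limit cardinal, for any $H$ with $|H|=\kappa<\lambda$ choose a regular $\nu$ with $\max\{\kappa^+,\theta\}\leq\nu<\lambda$. By (a) there is a strongly $\nu$-$\Rbf$-unbounded family $F_\nu\subseteq V^{\Por_\nu}$ of size $\nu$ which remains strongly $\nu$-$\Rbf$-unbounded in $V^{\Por_\delta}$; since $\nu$ is regular, $\bigcup_{y\in H}\{x\in F_\nu:x\sqsubset y\}$ has size $<\nu$, so some $x\in F_\nu$ is unbounded over $H$. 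Taking the union of the $F_\nu$ over a cofinal set of regular $\nu<\lambda$ gives a $\lambda$-$\Rbf$-unbounded family of size $\lambda$ in $V^{\Por_\lambda}$ that persists to $V^{\Por_\delta}$. Item (c) then follows immediately from (b) applied with $\lambda=\theta$ and with $\lambda=|\delta|$, using Remark~\ref{RemRelSys}(4); no separate cofinality argument on $\delta$ is needed.
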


In the practice, the converse inequalities of the last item are obtained by constructing a $\theta$-$\Rbf$-dominating family of size $|\delta|$.

\begin{example}\label{ExmInv}
We fix some notation. For any fuction $b:\omega\to V\menos\{\emptyset\}$ and $h\in\omega^\omega$, denote $\prod b:=\prod_{i<\omega}b(i)$ and $\Swf(b,h):=\prod_{i<\omega}[b(i)]^{\leq h(i)}$. For two functions $x,y$ with domain $\omega$, $x\in^* y$ denotes $\forall^\infty i<\omega(x(i)\in y(i))$. Denote by $\id_\omega$ the identity function on $\omega$.
  \begin{enumerate}[(1)]
      \item The relational system $\mathbf{D}=\la\omega^\omega,\omega^\omega,\leq^*\ra$ is a Prs, where $x\leq^* y$ means $\forall^\infty i<\omega(x(i)\leq y(i))$. Note that $\bfrak=\bfrak(\Dbf)$ and $\dfrak=\dfrak(\Dbf)$. Clearly, any $\omega^\omega$-bounding poset is $\mathbf{D}$-good. Any $\mu$-$\Fr$-linked poset (see Definition~\ref{Defuflinked}) is $\mu^+$-$\mathbf{D}$-good (\cite[Thm. 3.30]{mejiavert}).

      \item For $\mathcal{H}\subseteq\omega^\omega$ denote $\Lc(\omega,\mathcal{H}):=\langle\omega^\omega,\Swf(\omega,\mathcal{H}),\in^{*}\rangle$ where
          $\Swf(\omega,\mathcal{H}):=\bigcup_{h\in\mathcal{H}}\Swf(\omega,h)$ (here, $\omega$ denotes the constant function with value $\omega$). It is clear that $\Lc(\omega,\mathcal{H})$ is a gPrs. Any $\nu$-centered poset is $\nu^+$-$\Lc(\omega,\mathcal{H})$-good (\cite{JS}, see also \cite[Lemma 6]{Br} and \cite[Lemma 5.13]{BrM}).

          Whenever $\mathcal{H}$ is countable and non-empty, $\Lc(\omega,\mathcal{H})$ is a Prs because $\Swf(\omega,\mathcal{H})$ is $F_\sigma$ in $([\omega]^{<\omega})^\omega$. In addition, if $\mathcal{H}$ contains a function that goes to infinity then $\bfrak(\Lc(\omega,\mathcal{H}))=\add(\Nwf)$ and $\dfrak(\Lc(\omega,\mathcal{H}))=\cof(\Nwf)$ (see \cite[Thm. 2.3.9]{BJ}). Moreover, if all the members of $\Hwf$ go to infinity then any Boolean algebra with a strictly positive finitely additive measure is $\Lc(\omega,\Hwf)$-good (\cite{Ka}). In particular, any subalgebra of random forcing is $\mathbf{Lc}(\omega,\Hwf)$-good.

          For the rest of this paper, fix $\Hwf_*:=\{\id_\omega^{n+1}:n<\omega\}$.
      \item (Kamo and Osuga \cite{KO}) Fix a family $\mathcal{E}\subseteq\omega^\omega$ of size $\aleph_1$ of non-decreasing functions which satisfies
          \begin{enumerate}[(i)]
            \item $\forall e \in \mathcal{E}(e \leq \text{id}_\omega)$,
            \item $\forall e \in \mathcal{E}($ $\lim_{n\to+\infty}e(n)=+\infty$ and $\lim_{n\to+\infty}(n-e(n))=+\infty$),
            \item $\forall e \in \mathcal{E} \exists^{}e^{'} \in \mathcal{E}(e+1\leq^{*} e^{'})$ and
            \item $\forall \mathcal{E}^{'} \in [\mathcal{E}]^{\leq \aleph_0} \exists e\in \mathcal{E}\forall e^{'}\in\mathcal{E}^{'}(e^{'}\leq^{*}e)$.
          \end{enumerate}

          For $b,h \in \omega^\omega$ such that $b>0$ and $h\geq^*1$, we define $\hat{\Swf}(b,h)=\hat{\Swf}_{\mathcal{E}}(b,h)$ by
          \[\hat{\Swf}(b,h):=\bigcup_{e\in\mathcal{E}}\Swf(b,h^e)=\Big\{\varphi \in \prod_{n<\omega}\mathcal{P}(b(n)):\exists e\in \mathcal{E}\forall n<\omega (|\varphi(n)|\leq h(n)^{e(n)})\Big\}\]

          Let $n<\omega$. For $\psi, \varphi :\omega\to[\omega]^{<\omega}$,
          define the relation $\psi\blacktriangleright_{n}\varphi$ iff $\forall{k\geq n}(\psi(k)\nsupseteq\varphi(k))$, and define
          $\psi\blacktriangleright\varphi$ iff $\forall^{\infty}{k<\omega}(\psi(k)\nsupseteq\varphi(k))$, i.e., $\blacktriangleright=\bigcup_{n<\omega}\blacktriangleright_{n}$. Put $\Slm(b,h):=\langle \Swf(b,h^{\mathrm{id}_{\omega}}), \hat{\Swf}(b,h), \blacktriangleright\rangle$ which is a gPrs where $\Omega=\mathcal{E}$, $Z=\Swf(b,h^{\id_\omega})$ and $Y_e=\Swf(b,h^{e})$ for each $e\in\mathcal{E}$. Note that $Y_e$ is closed in $Z$.

          Any $\nu$-centered poset is $\nu^+$-$\Slm(b,h)$-good (\cite[Lemma 4.25]{CM}). On the other hand, if $b\nleq^*1$ then any $(h,b^{h^{\mathrm{id}_\omega}})$-linked poset (see \cite{KO}) is $2$-$\Slm(b,h)$-good (\cite[Lemma 10]{KO}).

          If $\sum_{i<\omega}\frac{h(i)^{i}}{b(i)}<\infty$ then $\Slm(b,h)\leqT\la\Nwf,2^\omega,\not\ni\ra$, so $\cov(\Nwf)\leq\bfrak(\Slm(b,h))$ and $\dfrak(\Slm(b,h))\leq\non(\Nwf)$ (see e.g. \cite[Lemma 2.3]{KM}).

      \item Denote $\Xi:=\{f:2^{<\omega}\to2^{<\omega} : \forall s\in 2^{<\omega}(s\subseteq f(s))\}$ and set $\Mg:=\la2^\omega,\Xi,\in^\bullet\ra$ where $x\in^\bullet f$ iff  $|\{s\in2^{<\omega}:x\supseteq f(s)\}|<\aleph_0$. This is a Prs and $\Mg\eqT\la2^\omega,\Mwf,\in\ra$, so $\bfrak(\Mg)=\non(\Mwf)$ and $\dfrak(\Mg)=\cov(\Mwf)$.

      \item Denote $\Id:=\la2^\omega,2^\omega,=\ra$, which is clearly a Prs. Note that $\bfrak(\Id)=2$ and $\dfrak(\Ct)=\cfrak:=2^{\aleph_0}$. It is easy to see that any $\theta$-cc poset is $\theta$-$\Ct$-good.
  \end{enumerate}
\end{example}

To finish this section, we review a general notion to preserve $\Dbf$-unbounded families.

\begin{definition}[{\cite{mejiavert}}]\label{Defuflinked}
Let $\Por$ be a poset and let $\mu$ be an infinite cardinal.
\begin{enumerate}[(1)]
    \item A set $Q\subseteq\Por$ is \emph{$\Fr$-linked} if, for any sequence $\bar{p}=\la p_n:n<\omega\ra$ in $Q$, there exists a $q\in\Por$ that forces $|\{n<\omega:p_n\in\dot{G}\}|=\aleph_0$.
    \item The poset $\Por$ is \emph{$\mu$-$\Fr$-linked} if $\Por=\bigcup_{\alpha<\mu}P_\alpha$ for some sequence $\la P_\alpha:\alpha<\mu\ra$ of $\Fr$-linked subsets of $\Por$. 
        When $\mu=\aleph_0$, we write \emph{$\sigma$-$\Fr$-linked}.
    \item The poset $\Por$ is \emph{$\mu$-$\Fr$-Knaster} if any subset of $\Por$ of size $\mu$ contains an $\Fr$-linked set of size $\mu$. 
\end{enumerate}
\end{definition}

\begin{theorem}[{\cite{BCM}}]\label{FrKnasterpresunb}
   If $\kappa$ is an uncountable regular cardinal then any $\kappa$-$\Fr$-Knaster poset preserves all the strongly $\kappa$-$\Dbf$-unbounded families from the ground model.
\end{theorem}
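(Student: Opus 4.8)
\emph{Proof proposal.} The plan is to argue by contradiction. Suppose $\Por$ is $\kappa$-$\Fr$-Knaster, $F\subseteq\omega^\omega$ is strongly $\kappa$-$\Dbf$-unbounded in the ground model $V$, yet some $p_0\in\Por$ and some $\Por$-name $\dot y$ for an element of $\omega^\omega$ satisfy $p_0\Vdash|\{x\in F:x\leq^*\dot y\}|\geq\kappa$. Since $\kappa$-$\Fr$-Knaster posets are $\kappa$-cc (infinite antichains are not $\Fr$-linked, as an injective $\omega$-sequence in an antichain witnesses), $\kappa$ remains a regular cardinal and $|F|\geq\kappa$ holds in any extension; hence it suffices to derive a contradiction from this configuration of $p_0$ and $\dot y$.

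First, using that $p_0$ forces $\{x\in F:x\leq^*\dot y\}$ not to be included in any ground-model subset of $F$ of size $<\kappa$, I would recursively choose, for $\xi<\kappa$, conditions $q_\xi\leq p_0$ and pairwise distinct $x_\xi\in F$ with $q_\xi\Vdash x_\xi\leq^*\dot y$. Strengthening each $q_\xi$, I fix $m_\xi<\omega$ and $q_\xi'\leq q_\xi$ with $q_\xi'\Vdash\forall i\geq m_\xi\,(x_\xi(i)\leq\dot y(i))$. As $\kappa$ is regular and uncountable, a pigeonhole gives $m^*<\omega$ and $I'\in[\kappa]^\kappa$ with $m_\xi=m^*$ for all $\xi\in I'$. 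Now apply $\kappa$-$\Fr$-Knaster to the family $\langle q_\xi':\xi\in I'\rangle$: if it takes $\kappa$ distinct values, the Knaster property yields an $\Fr$-linked subset of size $\kappa$ and hence $J\in[I']^\kappa$ with $\{q_\xi':\xi\in J\}$ $\Fr$-linked; if it takes $<\kappa$ values, regularity of $\kappa$ provides $J\in[I']^\kappa$ on which it is constant, and a singleton is trivially $\Fr$-linked. Either way one gets $J\in[I']^\kappa$ with $\{q_\xi':\xi\in J\}$ $\Fr$-linked.

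Then comes the dichotomy. If $\{x_\xi(i):\xi\in J\}$ is bounded in $\omega$ for every $i\geq m^*$, let $z\in\omega^\omega$ be given by $z(i)=\sup\{x_\xi(i):\xi\in J\}$ for $i\geq m^*$ and $z(i)=0$ otherwise; then $x_\xi(i)\leq z(i)$ for all $i\geq m^*$, so $x_\xi\leq^*z$ for every $\xi\in J$, whence $|\{x\in F:x\leq^*z\}|\geq|J|=\kappa$, contradicting that $F$ is strongly $\kappa$-$\Dbf$-unbounded in $V$. Otherwise there is $i^*\geq m^*$ with $\{x_\xi(i^*):\xi\in J\}$ unbounded; pick distinct $\xi_k\in J$ ($k<\omega$) with $x_{\xi_k}(i^*)\geq k$. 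The sequence $\langle q_{\xi_k}':k<\omega\rangle$ lies in the $\Fr$-linked set $\{q_\xi':\xi\in J\}$, so there is $q^*\in\Por$ forcing $\{k<\omega:q_{\xi_k}'\in\dot G\}$ to be infinite. Passing to a generic extension $V[G]$ with $q^*\in G$, infinitely many $k$ have $q_{\xi_k}'\in G$, and since $i^*\geq m^*=m_{\xi_k}$, each such $k$ gives $k\leq x_{\xi_k}(i^*)\leq\dot y[G](i^*)$; so the natural number $\dot y[G](i^*)$ bounds an unbounded set of $k$'s, which is absurd.

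The two purely routine points are the bookkeeping in the recursion producing the $q_\xi$ and the ``repeated value'' case of the Knaster step. The conceptual core is the dichotomy: $\Fr$-linkedness is precisely what forces a single value $\dot y(i^*)$ in the extension to bound infinitely many of the unbounded naturals $x_{\xi_k}(i^*)$ — that is where the hypothesis is genuinely used — while the complementary alternative cannot occur because $F$ is already strongly $\kappa$-$\Dbf$-unbounded in $V$. I expect the point most easily gotten wrong is that $\kappa$-$\Fr$-Knaster must be invoked at the level of the size-$\kappa$ family $\langle q_\xi':\xi\in I'\rangle$ (not on the countable subsequence extracted afterwards, which would be too small for the property), and only then does one thin down to the $\omega$-sequence tested against $\Fr$-linkedness.
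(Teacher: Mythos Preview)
Your argument is correct. Note, however, that the paper does not supply its own proof of this theorem: it is stated with attribution to \cite{BCM} and used as a black box, so there is no in-paper argument to compare against. Your contradiction via the dichotomy---either the $x_\xi$ are coordinatewise bounded above $m^*$ (yielding a ground-model $z$ that $\leq^*$-dominates $\kappa$-many members of $F$), or some coordinate $i^*$ is unbounded and $\Fr$-linkedness forces the single natural number $\dot y(i^*)$ to dominate infinitely many $x_{\xi_k}(i^*)\geq k$---is exactly the intended mechanism and matches the original proof in \cite{BCM}. The preliminary observation that $\kappa$-$\Fr$-Knaster implies $\kappa$-cc (so $\kappa$ stays regular and $|F|\geq\kappa$ persists) is also correct and needed for the ``strongly'' part of the conclusion.
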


\section{Iteration candidates}\label{SecFAM}

The main issue in the construction of the FS iteration for Theorem~\ref{mainnoba} is to ensure that
strongly $\Dbf$-unbounded families are preserved. For this purpose, Kellner, T\v{a}nasie and Shelah~\cite{KST} use the method of FAM (finite additive measure) limits, originally introduced by Shelah~\cite{ShCov}. 

In the construction of the iteration, it is expected to use $\Eor$, the standard $\sigma$-centered poset to add an eventually different real, to control $\non(\Mwf)$, but it seems that they do not have FAM limits. For this reason, Kellner, T\v{a}nasie and Shelah used an alternative poset that behaves well with FAM limits.

\begin{definition}[{\cite[Def. 1.11 \& 1.13]{KST}}]
  Define the following functions by recursion on $n<\omega$.
  \begin{align*}
    \rho_*(n) & = \max\bigg\{\prod_{k<n}M_*(k),n+2\bigg\},\\
    \pi_*(n) & = \Big((n+1)^2\rho_*(n)^{n+1}\Big)^{\rho_*(n)^n},\\
    a_*(n) & = \pi_*(n)^{n+2},\\
    M_*(n) & = a_*(n)^2.
  \end{align*}
  For each $n<\omega$ define the norm $\mu^*_n:\Pwf(M_*(n))\to[0,\infty]$ by $\mu^*_n(x)=\log_{a_*(n)}\Big(\frac{M_*(n)}{M_*(n)-|x|}\Big)$ ($\mu^*_n(M_*(n))=\infty$).
  Set the tree $T^*:=\bigcup_{n<\omega}\prod_{k<n}M_*(k)$.

  Define $\tilde{\Eor}$, ordered by $\subseteq$, as the poset whose conditions are subtrees $p\subseteq T^*$ such that, for some $2\leq m<\omega$,
  \begin{enumerate}[(i)]
    \item $\trk(p)>3m$ where $\trk(p)$ is the stem of $p$,
    \item $\mu^*_t(p)\geq 1+\frac{1}{m}$ for any $t\in p$ above $\trk(p)$, where
      $\mu^*_t(p):=\mu_{|t|}\big(\{i\in M(|t|) : t^\frown i\in p\}\big)$.
  \end{enumerate}
  Denote $\ls(p):=\frac{1}{m}$ where $m$ is the maximal number satisfying (i) and (ii).
\end{definition}

\begin{lemma}[{\cite[Lemma 1.17]{KST}}]\label{Etildeprop}
   The forcing $\tilde{\Eor}$ satisfies the following properties.
   \begin{enumerate}[(a)]
     \item $(\rho_*,\pi_*)$-linked. In particular, it is $\sigma$-linked and $\Slm(b_*,\rho_*)$-good, where $b_*(n):=(n+1)\rho_*(n)^{n+1}$.
     \item It adds an eventually different real in $\omega^\omega$ over the ground model. In particular, it adds an $\Mg$-dominating real over the ground model.
     \item It is equivalent to some subalgebra of random forcing. In particular, it is $\Lc(\omega,\Hwf)$-good whenever $\Hwf$ is a countable infinite set of functions that diverge to infinity.
   \end{enumerate}
\end{lemma}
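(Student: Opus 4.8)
The three items are nearly independent, so I would prove them one at a time.

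For (a), the core is the combinatorial verification of $(\rho_*,\pi_*)$-linkedness, and this is where I expect the main obstacle to lie, since it is the only place the peculiar magnitudes of $\rho_*,\pi_*,a_*,M_*$ are actually consumed. The plan is: for each $n<\omega$ partition $\tilde{\Eor}$ into $\pi_*(n)$ pieces, each $\rho_*(n)$-linked in the sense of \cite{KO}, by grouping a condition $p$ according to the value $m=1/\ls(p)$ together with a bounded amount of data describing $p$ near its trunk (the trunk together with finitely many further branching levels, up to a height governed by $m$); the recursion defining $\rho_*(n)$, $\pi_*(n)=\big((n+1)^2\rho_*(n)^{n+1}\big)^{\rho_*(n)^n}$, $a_*(n)=\pi_*(n)^{n+2}$ and $M_*(n)=a_*(n)^2$ is engineered exactly so that (i) there are at most $\pi_*(n)$ such data, and (ii) any $\rho_*(n)$-many conditions sharing a piece have a common extension --- namely the intersection of the trees, whose branching set at each node loses at most $\rho_*(n)$ points, hence at each node above the common trunk still has $\mu^*_t$-norm $\ge 1+\frac1{m'}$ for a slightly worse $m'$, because a single deletion perturbs $\mu^*_t$ by only $O(a_*(|t|)^{-1})$ and $a_*(|t|)$ is enormous. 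Once the linkedness is in place, $\sigma$-linkedness is immediate (restricting the level-$\ell$ partition to conditions with trunk of length $\ell$ exhibits $\tilde{\Eor}$ as a countable union of $2$-linked sets), and $\Slm(b_*,\rho_*)$-goodness follows from the linkedness by \cite[Lemma 10]{KO} (Example~\ref{ExmInv}(3)), using $b_*(n)=(n+1)\rho_*(n)^{n+1}$, $b_*\nleq^*1$, and that the partition can be arranged with $b_*(n)^{\rho_*(n)^n}$ pieces at level $n$.

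For (b), write $\eta\in\prod_n M_*(n)$ for the generic branch through $T^*$. Given $x\in V$, I would run the standard norm-pruning argument: starting from any $p$ with $\ls(p)=\frac1m$, delete from the branching set $\{i: t^\frown i\in p\}$ of every $t\in p$ above $\trk(p)$ the single value $x(|t|)$. Removing one point from a set of $\mu^*_{|t|}$-norm $\ge 1+\frac1m$ decreases its norm by a quantity tending to $0$ as $a_*(|t|)\to\infty$, and since every node above the trunk sits at height $>3m$ where $a_*$ is already huge, the resulting subtree $q\le p$ (same trunk) is still a condition, and $q\Vdash \eta(n)\ne x(n)$ for all $n\ge|\trk(q)|$. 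Thus $\{q: q\Vdash \forall^\infty n\,(\eta(n)\ne x(n))\}$ is dense for each $x\in V\cap\prod_n M_*(n)$, so $\eta$ is eventually different over $V$ within $\prod_n M_*(n)$; since $M_*(n)\to\infty$ this yields an eventually different real in $\omega^\omega$ in the usual way. For the ``in particular'', recall the classical fact that an eventually different real makes $\omega^\omega\cap V$ (hence $2^\omega\cap V$) meager --- it lies in the meager set $\{g: g$ is eventually different from $\eta\}$ --- so the extension adds a coded meager set covering $2^\omega\cap V$, i.e. a $\la2^\omega,\Mwf,\in\ra$-dominating real over $V$; applying the Tukey maps witnessing $\Mg\eqT\la2^\omega,\Mwf,\in\ra$ (Example~\ref{ExmInv}(4)) turns this into an $\Mg$-dominating real over $V$.

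For (c), equip $\Omega^*:=\prod_n M_*(n)$ with the product $\nu$ of the uniform probability measures, and send each $p\in\tilde{\Eor}$ to the compact set $[p]$ of its branches. At every $t\in p$ above $\trk(p)$ the surviving fraction of immediate successors is $\ge 1-a_*(|t|)^{-(1+\ls(p))}\ge 1-a_*(|t|)^{-1}$, and since $M_*$ grows fast enough that $\sum_n a_*(n)^{-1}<\infty$ (precisely the role of the rapid recursion), a Fubini computation gives $\nu([p])>0$; so $p\mapsto[p]$ lands in the measure algebra of $(\Omega^*,\nu)$, i.e. random forcing over $\Omega^*$. The plan is then to prove a density lemma: for every $p$ and every $\nu$-positive Borel $B\subseteq[p]$ there is $q\le p$ in $\tilde{\Eor}$ with $[q]\subseteq B$; one picks a Lebesgue density point $x\in B$, a level $n$ past which $B$ is $(1-\varepsilon)$-dense in $[p]$ below $x{\restriction}n$, and prunes out the sparse nodes while retaining enough branching, absorbing the slack into a larger $m$ --- a routine density/martingale argument, which I expect to be the main obstacle within (c). This lemma shows at once that $p\mapsto[p]$ preserves incompatibility (if $[p]\cap[q]=[p\cap q]$ were $\nu$-positive it would contain some $[r]$, forcing $p,q$ compatible) and that its image is dense in the suborder $\mathbb{B}:=\{b\in\mathrm{random}(\Omega^*): b\le[p]\text{ for some }p\in\tilde{\Eor}\}$; hence $\tilde{\Eor}$ is forcing-equivalent to the subalgebra $\mathbb{B}$ of random forcing. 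Finally, $\mathbb{B}$ carries the strictly positive (countably, hence finitely, additive) measure inherited from $\nu$, so by Kamburelis's theorem (Example~\ref{ExmInv}(2)) $\tilde{\Eor}$ is $\Lc(\omega,\Hwf)$-good for every countably infinite family $\Hwf\subseteq\omega^\omega$ of functions diverging to infinity.
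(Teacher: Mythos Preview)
The paper does not supply its own proof of this lemma; it is stated with a bare citation to \cite[Lemma~1.17]{KST}, and the ``In particular'' clauses are justified by the cross-references already recorded in Example~\ref{ExmInv}. Your sketch is a faithful outline of the arguments that actually appear in \cite{KST}: the $(\rho_*,\pi_*)$-linkedness via grouping conditions by trunk plus a bounded amount of additional branching data (with the intersection of $\rho_*(n)$ many conditions in one cell losing only negligibly in norm), the eventually-different real via single-point pruning at each level above the trunk, and the identification with a subalgebra of random forcing through the product measure on $\prod_n M_*(n)$ together with a density-point argument. Nothing in your plan is off-track; since the present paper treats this as a black box, there is simply no in-paper proof to compare against beyond noting that your approach coincides with that of the cited source.
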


Now we present the basic structure of the FS iterations that are used to prove the main results.

\begin{definition}\label{DefItCand}
   Let $\theta$ be an uncountable regular cardinal.  A \emph{$\theta$-iteration candidate} $\tbf$ is composed of
   \begin{enumerate}[(I)]
     \item an ordinal $\delta_\tbf$ partitioned into two sets $S_\tbf,R_\tbf$ (some could be empty);
     \item a FS iteration $\la\Por_{\tbf,\alpha},\Qnm_{\tbf,\alpha}:\alpha<\delta_\tbf\ra$;
     \item whenever $\alpha\in S_\tbf$, $\Qnm_{\tbf,\alpha}$ is a $\Por_{\tbf,\alpha}$-name of a ccc poset of size $<\theta$;
     \item whenever $\alpha\in R_\tbf$, there is some $\Por'_{\tbf,\alpha}\lessdot\Por_{\tbf,\alpha}$ such that $\Qnm_{\tbf,\alpha}=\Sor^{V^{\Por'_{\tbf,\alpha}}}$ where $\Sor$ is either random forcing or $\tilde{\Eor}$.
   \end{enumerate}
   When the context is clear, the subindex $\tbf$ is omitted.
\end{definition}

Note that, whenever $\tbf$ is a $\theta$-iteration candidate and $\alpha\in S$, since $\Por_\alpha$ is ccc, we can find a cardinal $\mu_\alpha=\mu_{\tbf,\alpha}<\theta$ in the ground model such that $\Por_\alpha$ forces $|\Qnm_\alpha|\leq\mu_\alpha$, so we can put $\Qnm_\alpha=\{\dot{q}_{\alpha,\zeta}:\zeta<\mu_\alpha\}$.

For the rest of this section, fix an ordinal $\delta_*$, a partition $P:=\la S,R\ra$ of $\delta_*$, and a regular uncountable cardinal $\theta$. Denote $\Psi^\theta_{P}:=\prod_{\alpha<\delta_*}W^\theta_{P,\alpha}$ where
\[W^\theta_{P,\alpha}:=\left\{\begin{array}{ll}
    \theta & \text{if $\alpha\in S$},\\
    \omega^{<\omega}\times\{\frac{1}{n}:1\leq n<\omega\} & \text{if $\alpha\in R$}.
\end{array}\right.\]

The following result is essential to construct iteration candidates that serve our purposes.

\begin{theorem}[Engelking and Kar\l owicz {\cite{TopThm}}]\label{EngKarl}
   Assume that $\theta=\theta^{\aleph_0}$ and $\delta_*<(2^\theta)^+$. Then there is some $H^*\subseteq\Psi^\theta_{P}$ of size $\theta$ such that any countable partial function from $\Psi^\theta_{P}$ is extended by some member of $H^*$.
\end{theorem}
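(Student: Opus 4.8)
The plan is to prove this as the ``$\aleph_1$-closed'' instance of the classical theorem of Engelking and Kar\l owicz~\cite{TopThm}: the hypotheses $\theta=\theta^{\aleph_0}$ and $\delta_*<(2^\theta)^+$ are exactly what that version requires. First I would reduce to a uniform target. Each factor satisfies $1\leq|W^\theta_{P,\alpha}|\leq\theta$ (it equals $\theta$ for $\alpha\in S$ and is countably infinite for $\alpha\in R$), so I fix a surjection $\pi_\alpha\colon\theta\to W^\theta_{P,\alpha}$ for every $\alpha<\delta_*$. It then suffices to produce a family $\mathcal{F}\subseteq{}^{\delta_*}\theta$ with $|\mathcal{F}|\leq\theta$ such that every countable partial function $p\colon\delta_*\rightharpoonup\theta$ is extended by some $f\in\mathcal{F}$: taking $H^*:=\{\langle\pi_\alpha(f(\alpha)):\alpha<\delta_*\rangle:f\in\mathcal{F}\}\subseteq\Psi^\theta_{P}$, any countable partial function $q$ that is a subfunction of an element of $\Psi^\theta_{P}$ lifts coordinatewise (through the $\pi_\alpha$) to some countable partial $p\colon\delta_*\rightharpoonup\theta$, and an $f\in\mathcal{F}$ extending $p$ yields a member of $H^*$ extending $q$; if $|H^*|=\theta$ is wanted on the nose, I adjoin $\theta$ dummy elements.

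Next I would build $\mathcal{F}$. Since $\delta_*\leq 2^\theta=|{}^{\theta}2|$, fix an injection $\alpha\mapsto x_\alpha$ of $\delta_*$ into ${}^{\theta}2$, and index $\mathcal{F}$ by the pairs $(B,h)$ with $B\in[\theta]^{\leq\aleph_0}$ and $h$ a partial function from ${}^{B}2$ to $\theta$ whose domain is countable, setting $g_{B,h}(\alpha):=h(x_\alpha\restriction B)$ when $x_\alpha\restriction B\in\dom h$ and $g_{B,h}(\alpha):=0$ otherwise. Using $\theta=\theta^{\aleph_0}$ (which in particular gives $2^{\aleph_0}\leq\theta$), there are $\theta$ choices of $B$, for each $B$ at most $2^{\aleph_0}\leq\theta$ choices of $\dom h$ and $\theta^{\aleph_0}=\theta$ choices of the values of $h$, so the index set has size at most $\theta\cdot 2^{\aleph_0}\cdot\theta=\theta$ and hence $|\mathcal{F}|\leq\theta$.

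For the covering property, given a countable partial $p\colon\delta_*\rightharpoonup\theta$ with domain $A$, I would separate the $x_\alpha$ ($\alpha\in A$): for each pair of distinct $\alpha,\beta\in A$ choose $\xi_{\alpha,\beta}<\theta$ with $x_\alpha(\xi_{\alpha,\beta})\neq x_\beta(\xi_{\alpha,\beta})$, and let $B:=\{\xi_{\alpha,\beta}:\alpha\neq\beta\text{ in }A\}$, a countable subset of $\theta$. Then $\alpha\mapsto x_\alpha\restriction B$ is injective on $A$, so $h(x_\alpha\restriction B):=p(\alpha)$ ($\alpha\in A$) defines a partial function from ${}^{B}2$ to $\theta$ with countable domain, and by construction $g_{B,h}\in\mathcal{F}$ extends $p$.

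The main obstacle is purely the cardinality bookkeeping for $\mathcal{F}$: one must restrict the auxiliary functions $h$ to \emph{countable} domains, since allowing all of ${}^{B}2$ would produce $\theta^{2^{\aleph_0}}$ many candidates, which may exceed $\theta$. Everything else — the existence of the separating set $B$, the lift through the $\pi_\alpha$, and the final verification — is routine.
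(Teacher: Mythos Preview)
Your argument is correct and is exactly the classical Engelking--Kar\l owicz construction (code coordinates by points of ${}^{\theta}2$, separate a countable set of coordinates by a countable set of indices, and read off the desired total function from a small ``table'' $h$). The paper does not give its own proof of this statement---it merely quotes the result from~\cite{TopThm}---so there is nothing further to compare.
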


From now on, assume that $\theta=\theta^{\aleph_0}$, $\delta_*<(2^\theta)^+$ and that $H^*$ is as in the previous lemma.

Consider random forcing $\Bor$, ordered by $\subseteq$, whose conditions are trees $T\subseteq2^{<\omega}$ such that $\Leb([T]\cap[t])>0$ for any $t\in T$, where $\Leb$ denotes the Lebesgue measure on $2^\omega$. For $T\in\Bor$, denote by $\trk(T)$ the stem of $T$ and set $\ls(T)=\frac{1}{m}$ where $m$ is the maximal natural number such that $\Leb([T])>(1-\frac{1}{m})\Leb([\trk(T)])$.

The main point for the main results is to construct an iteration candidate that produces a $\theta$-$\Fr$-Knaster poset, this to ensure the preservation of strongly $\Dbf$-unbounded families. In the context of iterations, we need to look at quite uniform $\Delta$-systems of conditions to deal with such property. The corresponding technicalities are presented below.

\begin{definition}
Let $\tbf$ be a $\theta$-iteration candidate with $\delta_\tbf\leq\delta_*$, $S_\tbf\subseteq S$ and $R_\tbf\subseteq R$.
\begin{enumerate}[(1)]
   \item For $\alpha<\delta$, let $\Por^*_\alpha$ be the set of conditions $p\in\Por_\alpha$ such that, for any $\xi\in\dom p$, if $\xi\in S$ then $p(\xi)=\dot{q}_{\xi,\zeta}$ for some $\zeta<\mu_\xi$; if $\xi\in R$ then $p(\xi)$ is a $\Por'_\alpha$-name (not just a $\Por_\alpha$-name) and both $\trk(p(\xi))$ and $\ls(p(\xi))$ are already decided by $p{\upharpoonright}\xi$; and furthermore
       \[\sum_{\xi\in R\cap \dom p}\sqrt{\ls(p(\xi))}<\frac{1}{2}.\]
       It is shown in \cite[Lemma 2.34]{KST} that $\Por^*_\alpha$ is dense in $\Por_\alpha$.
   \item Say that $\bar p=\la p_i:i<\gamma\ra$ is a \emph{$\tbf$-uniform $\Delta$-system} if it satisfies:
     \begin{enumerate}[(i)]
       \item $p_i\in\Por^*_\delta$;
       \item there is some $m<\omega$ such that $\dom p_i=\{\alpha_{i,k}:k<m\}$ (increasing enumeration);
       \item there is some $v\subseteq m$ such that, for any $k\in v$, the sequence $\la\alpha_{i,k}:i<\gamma\ra$ is constant with value $\alpha_k$;
       \item if $k\in m\menos v$ then the sequence $\la\alpha_{i,k}:i<\gamma\ra$ is increasing (hence, $\la\dom p_i : i<\gamma\ra$ forms a $\Delta$-system with root $\{\alpha_k:k\in v\}$);
       \item there is a partition $m=s\cup r\cup e$ such that, for any $i<\gamma$ and $k<m$: $\alpha_{i,k}\in S$ iff $k\in s$; and $k\in r$ iff $\Qnm_{\alpha_{i,k}}=\Bor^{V^{\Por'_{\alpha_{i,k}}}}$;
       \item for any $k\in r\cup e$, the sequence $\la(\trk(p_i(\alpha_{i,k})),\ls(p_i(\alpha_{i,k}))):i<\gamma\ra$ has constant value $(t_k,\varepsilon_k)$;
       \item for any $k\in s\cap v$ there is some $\zeta_k$ such that $p_i(\alpha_{k})=\dot{q}_{\alpha_{k},\zeta_k}$ for any $i<\gamma$.
     \end{enumerate}
     It is not hard to see that a $\tbf$-uniform $\Delta$-system $\bar p$ determines a partial function $h_{\bar p}$ in $\Psi^\theta_P$ with domain $\bigcup_{i<\gamma}\dom p_i$ such that $h(\alpha_{i,k})=(t_k,\varepsilon_k)$ when $k\in r\cup e$, and $p_i(\alpha_{i,k})=\dot{q}_{\alpha_{i,k},\zeta_{h(\alpha_{i,k})}}$ when $k\in s$. Given $h\in H^*$, say that \emph{$\bar p$ follows $h$} if $h_{\bar p}\subseteq h$.
\end{enumerate}
\end{definition}

In the following result we consider the existence of a class $\Lambda^\theta_P$ of \emph{good} iteration candidates, whose elements are iterations that match our goals. This class is not formally defined in~\cite{KST} but instead it is clarified how such good iterations are constructed using FAM limits. In the result below, corresponding to the main technical lemma in~\cite{KST}, we try our best to summarize the important features of the good iterations without referring to the central notions of FAM limits at all.

\begin{theorem}[{\cite[Lemma 2.39]{KST}}]\label{mainlemma}
  There is a class $\Lambda^\theta_{P}$ of $\theta$-iteration candidates such that
  \begin{enumerate}[(a)]
     \item the trivial candidate $\tbf_0$ with $\delta_{\tbf_0}=0$ is in $\Lambda^\theta_P$;
     \item if $\tbf\in\Lambda^\theta_P$ then $\delta_\tbf\leq\delta_*$, $S_\tbf\subseteq S$ and $R_\tbf\subseteq R$;
     \item if $\tbf\in\Lambda^\theta_P$, $\delta=\delta_\tbf\in S$ and $\Qnm$ is a $\Por_{\delta}$-name of a ccc poset of size $<\theta$, then $\tbf^+\in\Lambda^\theta_P$ where $\tbf^+$ is the natural $\theta$-iteration candidate that extends $\tbf$ such that $\delta_{\tbf^+}=\delta+1$ and $\Qnm_{\tbf^+,\delta}=\Qnm$;
     \item if $\tbf\in\Lambda^\theta_P$, $\delta=\delta_\tbf\in R$, $A\subseteq\Por_{\delta}$ and $\Sor$ is either $\Bor$ or $\tilde{\Eor}$, then there is some $\tbf^+\in\Lambda^\theta_P$ extending $\tbf$ such that $A\subseteq\Por'_{\tbf^+,\delta}$, $|\Por'_{\tbf^+,\delta}|\leq\max\{|A|,\theta\}^{\aleph_0}$, and $\Qnm_{\tbf^+,\delta}=\Sor^{V^{\Por'_{\tbf^+,\delta}}}$;
     \item in addition to the above, if $\la\tbf^+_\eta: \eta<\omega_1\ra$ is a sequence form $\Lambda^\theta_P$ such that $\delta_{\tbf^+_\eta}=\delta+1$, $\tbf_\eta$ extends $\tbf$ (using the same $\Sor$ at $\delta$) and $\Por'_{\tbf^+_\xi,\delta}\subseteq\Por'_{\tbf^+_\eta,\delta}$ whenever $\xi<\eta<\omega_1$, then $\tbf^+\in\Lambda^\theta_P$ where $\tbf^+$ is the iteration candidate extending $\tbf$ such that $\delta_{\tbf^+}=\delta+1$, $\Por'_{\tbf^+,\delta}=\bigcup_{\eta<\omega_1}\Por'_{\tbf^+_\eta,\delta}$ and $\Qnm_{\tbf^+,\delta}$ is defined using the same $\Sor$.
     \item if $\delta\leq\delta_*$ is limit and $\la\tbf_\alpha:\alpha<\delta\ra$ is a sequence of iteration candidates in $\Lambda^\theta_P$ such that $\delta_{\tbf_\alpha}=\alpha$ and $\tbf_\beta$ extends $\tbf_\alpha$ whenever $\alpha<\beta<\delta$, then the natural direct limit of $\la\tbf_\alpha:\alpha<\delta\ra$ belongs to $\Lambda^\theta_P$;
     \item if $\tbf\in\Lambda^\theta_P$ and $\bar p=\la p_n:n<\omega\ra$ is a $\tbf$-uniform $\Delta$-system, then there is some $q\in\Por_{\delta_\tbf}$ that forces $|\{n<\omega:p_n\in\dot{G}\}|=\aleph_0$.
  \end{enumerate}
\end{theorem}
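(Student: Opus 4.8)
The plan is to \emph{define} $\Lambda^\theta_P$ to be the class of all $\theta$-iteration candidates $\tbf$ (with $\delta_\tbf\leq\delta_*$, $S_\tbf\subseteq S$, $R_\tbf\subseteq R$) that can be constructed together with a \emph{coherent system of finitely additive measures} in the sense of Shelah~\cite{ShCov} and Kellner--Shelah--T\v anasie~\cite[\S2]{KST}: to $\tbf$ one attaches, for every $\alpha\in R_\tbf$ and every relevant countable piece of data along the iteration, a finitely additive probability measure $\Xi$ on $\mathcal{P}(\omega)$ vanishing on finite sets, in such a way that (1) the $\Xi$'s attached at different stages and to compatible data cohere, and (2) every random/$\tilde{\Eor}$ step is a \emph{FAM-limit suborder}, i.e.\ $\Por'_{\tbf,\alpha}\lessdot\Por_\alpha$ is closed under the countably many operations (countable sequences, the Boolean operations producing ``limit trees'', and anchoring to the fixed family $H^*$) needed to form $\Xi$-limits of $\omega$-sequences of $\Qnm_{\tbf,\alpha}$-conditions. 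With this definition, items (a) and (b) hold by fiat: the empty candidate carries the empty coherent system, and the constraints on $\delta_\tbf,S_\tbf,R_\tbf$ are built in.

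Items (c) and (f) are the cheap closure clauses. Appending a ccc poset of size $<\theta$ at a stage in $S$ does not interact with the FAM bookkeeping, so the coherent system of $\tbf$ still works for $\tbf^+$; and a direct limit at a limit stage carries the union of the coherent systems along the chain, coherence being preserved because the new conditions at the limit are finitely supported and hence already covered below. The substantive closure clause is (d) (with (e) its $\omega_1$-directed version): given $A\subseteq\Por_\alpha$ one enlarges it to $\Por'_{\tbf^+,\alpha}\lessdot\Por_\alpha$ containing $A$ and closed under the countably many operations above; since $\theta=\theta^{\aleph_0}$ this can be done with $|\Por'_{\tbf^+,\alpha}|\leq\max\{|A|,\theta\}^{\aleph_0}$, and a limit FAM on the enlarged suborder is chosen by a routine recursion, using $\delta_*<(2^\theta)^+$ and Theorem~\ref{EngKarl} to keep all auxiliary objects of size $\leq\theta$; (e) is then just the union of an $\omega_1$-increasing chain of such suborders, the coherent systems matching up by construction.

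The core is (g). Let $\tbf\in\Lambda^\theta_P$ with its coherent FAM system, and let $\bar p=\la p_n:n<\omega\ra$ be a $\tbf$-uniform $\Delta$-system; by the Engelking--Kar\l owicz property of $H^*$ the countable partial function $h_{\bar p}$ is extended by some $h\in H^*$, so $\bar p$ follows $h$, and the coherent system was set up in clauses (d)--(e) to handle exactly such data. Define $q\in\Por_{\delta_\tbf}$ with $\dom q=\{\alpha_k:k\in v\}$ coordinatewise: $q(\alpha_k)$ is the common value for $k\in s\cap v$ (well-defined by uniformity clause (vii)); and for $k\in(r\cup e)\cap v$, $q(\alpha_k)$ is the $\Xi$-FAM-limit of $\la p_n(\alpha_{n,k}):n<\omega\ra$, where $\Xi$ is the measure the coherent system assigns to this finite piece of data. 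This $q(\alpha_k)$ is a legitimate random (resp.\ $\tilde{\Eor}$) condition precisely because the largeness invariant $\ls(p_n(\alpha_{n,k}))=\varepsilon_k$ is constant in $n$ (uniformity clause (vi)) and the inequality $\sum_{\xi\in R\cap\dom p_n}\sqrt{\ls(p_n(\xi))}<\frac12$ from the definition of $\Por^*_{\delta}$ keeps the $\Xi$-averaged Lebesgue measures (resp.\ $\mu^*$-norms) bounded away from $0$. One then proves by induction on $\alpha\leq\delta_\tbf$ that $q{\upharpoonright}\alpha$ forces ``$\{n<\omega:p_n{\upharpoonright}\alpha\in\dot G_\alpha\}$ has positive $\Xi$-measure'': the successor step at an $R$-coordinate in the root is the defining property of the FAM-limit, at coordinates outside the root it uses that those fresh coordinates are met by the generic with the correct asymptotic frequency, and the remaining steps are routine. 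Since a $\Xi$-positive set is infinite, $q\Vdash|\{n<\omega:p_n\in\dot G\}|=\aleph_0$.

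The only genuinely delicate points — and the ones I would import wholesale from \cite{KST} rather than re-prove — are the two facts underlying (g): that the $\Xi$-FAM-limit of an $\omega$-sequence of $\tilde{\Eor}$-conditions of constant $\ls$ is again a $\tilde{\Eor}$-condition, and that a coherent FAM system can be threaded through clauses (c)--(f) at all. The former is exactly why the functions $\rho_*,\pi_*,a_*,M_*$ and the logarithmic norms $\mu^*_n$ were chosen as they are in the definition of $\tilde{\Eor}$ (so that averaging $\omega$-many conditions of norm $\geq1+\frac1m$ against a finitely additive measure still leaves norm $\geq1$ on a large set of nodes), and the latter is essentially the content of the bulk of \cite[\S2]{KST}. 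Since the statement we want is a verbatim repackaging of \cite[Lemma 2.39]{KST} with the FAM-limit vocabulary absorbed into the definition of the class $\Lambda^\theta_P$, no new argument beyond this bookkeeping is needed.
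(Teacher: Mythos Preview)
The paper does not prove this theorem: it is stated with the citation \cite[Lemma 2.39]{KST} and treated as a black box, the surrounding text explicitly saying that the class $\Lambda^\theta_P$ ``is not formally defined in~\cite{KST}'' and that the author's aim is only ``to summarize the important features of the good iterations without referring to the central notions of FAM limits at all.'' So there is no proof in the paper to compare your proposal against.

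Your sketch is a reasonable high-level outline of how the argument in \cite{KST} actually goes --- define membership in $\Lambda^\theta_P$ by the existence of a coherent system of finitely additive measures, verify the closure clauses (c)--(f) by extending or amalgamating these systems, and prove (g) by taking $\Xi$-FAM-limits coordinatewise along the root of the $\Delta$-system and inducting on the support. You correctly flag the two genuinely hard ingredients (that FAM-limits of $\tilde{\Eor}$-conditions are $\tilde{\Eor}$-conditions, and that the coherent system can be threaded through the iteration) and correctly defer them to \cite{KST}. One small inaccuracy: in (g) you should not restrict $\dom q$ to the root $\{\alpha_k:k\in v\}$ alone; the coordinates outside the root also need to be handled (in \cite{KST} the condition $q$ has support covering all of $\bigcup_n\dom p_n$, with the FAM-limit taken at every $R$-coordinate, not just those in the root), since otherwise the inductive claim about positive $\Xi$-measure of $\{n:p_n{\upharpoonright}\alpha\in\dot G_\alpha\}$ breaks at the non-root coordinates. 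With that adjusted, your proposal matches the structure of the \cite{KST} proof that the paper is summarizing.
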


As a consequence, good iteration candidates are $\theta$-$\Fr$-Knaster, as desired.

\begin{corollary}\label{FrIt}
   If $\tbf\in\Lambda^\theta_P$ then $\Por_\delta$ is $\theta$-$\Fr$-Knaster.
\end{corollary}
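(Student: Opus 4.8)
The plan is to unfold the definition of $\theta$-$\Fr$-Knaster and reduce it, via a $\Delta$-system argument, to property~(g) of Theorem~\ref{mainlemma}. Fix $\tbf\in\Lambda^\theta_P$, write $\delta=\delta_\tbf$, and let $A\subseteq\Por_\delta$ with $|A|=\theta$. We must find $B\subseteq A$ with $|B|=\theta$ such that $B$ is $\Fr$-linked, i.e.\ every $\omega$-sequence from $B$ has a lower bound in the sense of Definition~\ref{Defuflinked}(1). First I would pass to $\Por^*_\delta$, which is dense in $\Por_\delta$ by \cite[Lemma 2.34]{KST}: for each $p\in A$ pick $p^*\in\Por^*_\delta$ with $p^*\leq p$; it suffices to find a $\Fr$-linked subset of size $\theta$ among the $p^*$'s, since a set whose every $\omega$-subsequence is $\Fr$-linked downstream remains $\Fr$-linked after passing to stronger conditions (a lower bound of the stronger conditions is a lower bound of the originals). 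So without loss of generality $A\subseteq\Por^*_\delta$ and $|A|=\theta$.

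Next I would carry out the $\Delta$-system refinement so as to extract a subfamily that is (the union of finitely many pieces, each of which is) a $\tbf$-uniform $\Delta$-system. Since $\theta=\theta^{\aleph_0}$ is regular and uncountable, by the usual $\Delta$-system lemma (for sets of size $<\aleph_1$, i.e.\ finite domains) I can thin $A$ to a subfamily of size $\theta$ whose domains form a $\Delta$-system with some root, all of the same finite size $m$, with a fixed partition $m=s\cup r\cup e$ recording which coordinates land in $S$, which are random coordinates, and which are $\tilde\Eor$-coordinates (there are only finitely many such partitions, and each coordinate's type is determined by the name). On the root coordinates in $s$ there are only $<\theta$ possible values $\dot q_{\alpha_k,\zeta}$ (as $\mu_{\alpha_k}<\theta$), and on the coordinates in $r\cup e$ the pair $(\trk,\ls)$ ranges over a set of size $\theta$ — here I use $\theta^{\aleph_0}=\theta$ to keep the bookkeeping within $\theta$ — so by further thinning (and regularity of $\theta$) I may assume all of conditions (i)--(vii) in the definition of a $\tbf$-uniform $\Delta$-system hold. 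The resulting $B\subseteq A$ has size $\theta$ and every $\omega$-indexed subsequence $\la p_n:n<\omega\ra$ of $B$ is a $\tbf$-uniform $\Delta$-system.

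Finally, for any such $\la p_n:n<\omega\ra$, Theorem~\ref{mainlemma}(g) supplies $q\in\Por_\delta$ forcing $|\{n<\omega:p_n\in\dot G\}|=\aleph_0$, which is exactly the witness required by the definition of $\Fr$-linked. Hence $B$ is an $\Fr$-linked subset of $A$ of size $\theta$, so $\Por_\delta$ is $\theta$-$\Fr$-Knaster. The only point needing care — the main obstacle — is the cardinal arithmetic in the $\Delta$-system step: one must confirm that the space of ``types'' of a condition in $\Por^*_\delta$ (domain size, partition, root values, and the finitely many stems/levels $(t_k,\varepsilon_k)$) has size at most $\theta$ so that pigeonhole against a set of size $\theta=\cf(\theta)$ applies; this is precisely where the hypothesis $\theta=\theta^{\aleph_0}$ (already standing in this section) is used, together with the fact that conditions in $\Por^*_\delta$ have their stems and $\ls$-values decided below, so only countably much data per coordinate is relevant.
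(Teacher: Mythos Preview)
Your proposal is correct and follows exactly the paper's approach: pass to the dense set $\Por^*_\delta$, thin to a $\tbf$-uniform $\Delta$-system of size $\theta$, and invoke Theorem~\ref{mainlemma}(g); the paper compresses all of this into a single sentence (``It is possible to find an increasing function $g:\theta\to\theta$ such that $\{p_{g(i)}:i<\theta\}$ forms a $\tbf$-uniform $\Delta$-system''), while you spell out the refinement. One small imprecision: the pairs $(\trk,\ls)$ on the $r\cup e$-coordinates range over a \emph{countable} set (cf.\ the definition of $W^\theta_{P,\alpha}$ for $\alpha\in R$), not a set of size $\theta$, so the hypothesis $\theta=\theta^{\aleph_0}$ is not actually needed for the pigeonhole step here---regularity and uncountability of $\theta$ suffice.
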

\begin{proof}
   Assume that $\{p_i:i<\theta\}\subseteq\Por^*_\delta$. It is possible to find an increasing function $g:\theta\to\theta$ such that $\{p_{g(i)}:i<\theta\}$ forms a $\tbf$-uniform $\Delta$-system. Hence, by Theorem~\ref{mainlemma}(g), this family is $\Fr$-linked.
\end{proof}

\section{The main results}\label{SecMain}

Now we are ready to prove Theorem~\ref{mainnoba}. Like in~\cite{KST}, we require the additional hypothesis $\lambda_5\leq2^{\lambda_2}$ to make sense of Theorem~\ref{mainlemma} (because it relies on Theorem~\ref{EngKarl}), but afterwards it is shown how to get rid of this requirement.

The case $\lambda_2=\lambda_3$ of Theorem~\ref{mainnoba} can be solved by very standard methods from~\cite{Br} without good iteration candidates. The argument below can be imitated to prove this simpler case, just ignore (3) and (3') and, in (4) and (4'), use small models as in (1),(1') and (2),(2').

\begin{theorem}\label{main1}
   Assume that $\lambda_1\leq\lambda_2=\lambda_2^{\aleph_0}<\lambda_3\leq\lambda_4$ are uncountable regular cardinals, let $\lambda_5$ be a cardinal such that $\lambda_5=\lambda_5^{<\lambda_4}\leq2^{\lambda_2}$ and either $\lambda_4=\lambda_5$ or $\lambda_4^{\aleph_0}<\lambda_5$, and assume that $\lambda_3$ is $\aleph_1$-inaccessible. Then there is a FS iteration of ccc posets that adds
   \begin{enumerate}[(a)]
     \item a strongly $\theta$-$\Rbf_i$-unbounded family of size $\theta$ for any regular $\lambda_i\leq\theta\leq\lambda_5$ and $i<4$, where $\Rbf_0=\Id$, $\lambda_0=2$, $\Rbf_1=\Lc(\omega,\Hwf_*)$, $\Rbf_2=\Slm(b_*,\rho_*)$, and $\Rbf_3=\Dbf$;
     \item a strongly $\theta$-$\Rbf_i$-unbounded family of size $\theta$ for any regular $\theta\in\{\lambda_4\}\cup(\lambda_4^{\aleph_0},\lambda_5]$, where $\Rbf_4=\Mg$;
     \item a $\lambda_i$-$\Rbf_i$-dominating family of size $\lambda_5$ for any $i<5$.
   \end{enumerate}
   In particular, this poset forces $\add(\Nwf)=\lambda_1$, $\bfrak=\lambda_2$, $\cov(\Nwf)=\lambda_3$, $\non(\Mwf)=\lambda_4$, and $\cov(\Mwf)=\cfrak=\lambda_5$.
\end{theorem}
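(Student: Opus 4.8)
The plan is to construct the FS iteration as a ``good iteration candidate'' in the sense of Theorem~\ref{mainlemma}, bookkeeping over the factor posets so that the five cardinal characteristics land on their prescribed values. First I would fix $\theta:=\lambda_2$ (which is regular with $\theta=\theta^{\aleph_0}$ by hypothesis), fix $\delta_*:=\lambda_5$ and the partition $P=\la S,R\ra$ of $\lambda_5$: put coordinates where we force with $\tilde\Eor$ or with a subalgebra of random into $R$, and coordinates where we force with small ccc posets (in particular the ``small'' posets witnessing $\add(\Nwf)\geq\lambda_1$ and $\cov(\Nwf)\geq\lambda_3$) into $S$. Since $\lambda_5=\lambda_5^{<\lambda_4}$ one has $\lambda_5=\lambda_5^{\aleph_0}\leq\lambda_5^{<\lambda_2}\leq 2^{\lambda_2}$ under the extra hypothesis $\lambda_5\leq 2^{\lambda_2}$, so Theorem~\ref{EngKarl} applies and Theorem~\ref{mainlemma} is available; the fixed family $H^*\subseteq\Psi^\theta_P$ will be used, but in the present argument I would invoke only the closure properties (a)--(g), not FAM limits directly.

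Next I would set up the bookkeeping. Along the iteration of length $\lambda_5$ I would interleave: (i) at cofinally many stages, random forcing $\Bor$, which is $\Lc(\omega,\Hwf_*)$-good (Example~\ref{ExmInv}(2)) and adds a ``$\cov(\Nwf)$-type'' real, to push $\cov(\Nwf)$ up to $\lambda_3$ and to supply a $\lambda_3$-$\Rbf_1$-dominating family of size $\lambda_5$; (ii) at cofinally many stages, $\tilde\Eor$, which by Lemma~\ref{Etildeprop} is $\sigma$-linked (hence $\aleph_2$-$\Dbf$-good), $\Slm(b_*,\rho_*)$-good, $\Lc(\omega,\Hwf)$-good, and adds an $\Mg$-dominating (eventually different) real, to push $\non(\Mwf)$ up to $\lambda_4$; (iii) at cofinally many stages, small ccc posets of size $<\lambda_1$ (amoeba-type) to handle $\add(\Nwf)$, and small posets (of size $<\theta=\lambda_2$, using $\aleph_1$-inaccessibility of $\lambda_3$ only to keep the relevant combinatorial posets small) for $\cov(\Nwf)$. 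The key point is that every factor is either small or a random/$\tilde\Eor$ factor, and each is $\theta$-$\Rbf$-good simultaneously for all the relational systems $\Rbf_0,\dots,\Rbf_4$ of the theorem: smallness gives $\theta$-goodness by Lemma~\ref{smallgood}; $\sigma$-linked/$\sigma$-centered factors give goodness for $\Lc$, $\Slm$, $\Ct$ and (being $\sigma$-$\Fr$-linked) for $\Dbf$; random is good for $\Lc$ and (being $\omega^\omega$-bounding, hence $\Fr$-Knaster) for $\Dbf$.

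For the unbounded (lower-bound) half I would use Theorem~\ref{ItPres}: since the whole iteration $\Por_{\lambda_5}$ is a FS iteration of non-trivial $\theta$-$\Rbf_i$-good posets for each $i$, item (a) of that theorem gives, for each regular $\theta'\in[\lambda_i,\lambda_5]$, a strongly $\theta'$-$\Rbf_i$-unbounded family of size $\theta'$ surviving to the final model; for $\Rbf_4=\Mg$ one must restrict to $\theta'\in\{\lambda_4\}\cup(\lambda_4^{\aleph_0},\lambda_5]$ because the $\Mg$-dominating reals are only added cofinally above stage $\lambda_4^{\aleph_0}$ (this is where the hypothesis ``$\lambda_4=\lambda_5$ or $\lambda_4^{\aleph_0}<\lambda_5$'' enters). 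For the dominating (upper-bound) half, at each coordinate I would, via the bookkeeping, handle a name for a member of the relevant $Y_i$ and thereby build, after $\lambda_5$ many steps, a $\lambda_i$-$\Rbf_i$-dominating family of size $\lambda_5$; together with Lemma~\ref{presunb}(c) and Remark~\ref{RemRelSys}(4),(6) this pins $\bfrak(\Rbf_i)=\lambda_i$ and $\dfrak(\Rbf_i)=\lambda_5$, i.e.\ by Example~\ref{ExmInv} it forces $\add(\Nwf)=\lambda_1$, $\bfrak=\lambda_2$, $\cov(\Nwf)=\lambda_3$, $\non(\Mwf)=\lambda_4$, $\cov(\Mwf)=\cfrak=\lambda_5$. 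Finally I would remove the auxiliary hypothesis $\lambda_5\leq 2^{\lambda_2}$ by a standard argument: first force with a measure-algebra (or a FS product of Cohen reals) to blow up $2^{\lambda_2}$ above $\lambda_5$ without changing the small cardinal characteristics below $\lambda_5$, or alternatively absorb the blow-up into the $S$-part of the iteration.

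The main obstacle I anticipate is verifying that the iteration really can be realized inside the class $\Lambda^\theta_P$ of Theorem~\ref{mainlemma} --- i.e.\ that the random and $\tilde\Eor$ factors can always be taken over complete suborders $\Por'_\alpha\lessdot\Por_\alpha$ absorbing the relevant names (clause (d) of Theorem~\ref{mainlemma}), with the $\omega_1$-chain-closure clause (e) used to stabilize these suborders against the $\aleph_1$-many names that show up --- and hence that Corollary~\ref{FrIt} applies to give $\theta$-$\Fr$-Knaster, so that strongly $\lambda_2$-$\Dbf$-unbounded families are preserved (Theorem~\ref{FrKnasterpresunb}) and $\bfrak=\lambda_2$ rather than collapsing. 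Getting the bookkeeping to do all of (a), (b), (c) at once while respecting the ``$\Por^*_\alpha$-dense'' and $\tbf$-uniform-$\Delta$-system constraints is the delicate part; the goodness computations and the final $2^{\lambda_2}$-removal are routine by comparison.
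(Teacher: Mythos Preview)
Your overall framework --- construct the iteration inside the class $\Lambda^{\lambda_2}_P$ so that Corollary~\ref{FrIt} gives $\lambda_2$-$\Fr$-Knaster and Theorem~\ref{FrKnasterpresunb} preserves strongly $\Dbf$-unbounded families --- is the same as the paper's, but two concrete pieces of the construction are wrong. First, nothing in your list of iterands adds \emph{dominating} reals over arbitrary sets of size ${<}\lambda_2$, so you cannot build the $\lambda_2$-$\Dbf$-dominating family required for $\bfrak\geq\lambda_2$: random forcing and $\tilde\Eor$ are both (equivalent to subalgebras of) random, hence $\omega^\omega$-bounding, and your amoeba-type posets only handle sets of size ${<}\lambda_1$. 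The paper adds a fourth kind of iterand, Hechler forcing $\Dor^{N_\xi}$ over transitive models $N_\xi$ of size ${<}\lambda_2$ (these go into $S$). Your phrase ``small posets of size ${<}\lambda_2$ \dots\ for $\cov(\Nwf)$'' cannot be right as written, since $\cov(\Nwf)=\lambda_3>\lambda_2$ is pushed up by the \emph{random} factors in $R$, taken over complete suborders $\Por'_\xi$ of size ${<}\lambda_3$; the $\aleph_1$-inaccessibility of $\lambda_3$ enters exactly here, guaranteeing that Theorem~\ref{mainlemma}(d) produces $|\Por'_\xi|^{\aleph_0}<\lambda_3$, so these factors are $\lambda_3$-$\Slm(b_*,\rho_*)$-good by Lemma~\ref{smallgood}.

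Second, with $\delta_*=\lambda_5$ you do not obtain the strongly $\lambda_4$-$\Mg$-unbounded family of item~(b). The iteration is \emph{not} $\lambda_4$-$\Mg$-good (every $\tilde\Eor$-stage adds an $\Mg$-dominating real), so Theorem~\ref{ItPres} is unavailable at $\theta=\lambda_4$, and your explanation (``$\Mg$-dominating reals are only added cofinally above stage $\lambda_4^{\aleph_0}$'') does not reflect what actually happens. The paper instead first adds $\lambda_5$ Cohen reals and then runs the iteration of length $\lambda_5\lambda_4$ (ordinal product), so that $\cf(\delta_*)=\lambda_4$: the cofinal $\lambda_4$-sequence of Cohen reals added at limit stages is the desired strongly $\lambda_4$-$\Mg$-unbounded family. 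The range $(\lambda_4^{\aleph_0},\lambda_5]$ in (b) is handled by goodness, since every iterand (including the $\tilde\Eor$-factors, over $\Por'_\xi$ of size ${\leq}\lambda_4^{\aleph_0}$) has size ${\leq}\lambda_4^{\aleph_0}$, and this is where the dichotomy ``$\lambda_4=\lambda_5$ or $\lambda_4^{\aleph_0}<\lambda_5$'' is used. Finally, removing the hypothesis $\lambda_5\leq 2^{\lambda_2}$ is not part of this theorem at all; it is Theorem~\ref{main2}, and it is done with the ${<}\lambda_2$-closed poset $\mathrm{Fn}_{<\lambda_2}(\lambda_5\times\lambda_2,2)$ together with a careful interleaving using Theorem~\ref{mainlemma}(e), not with a measure algebra or a Cohen product.
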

\begin{proof}
    Let $W$ be the model obtained after adding $\lambda_5$-many Cohen reals. Fix $\delta_*=\lambda_5\lambda_4$ (ordinal product) and a partition $\la S_k : 1\leq k\leq 4\ra$ of $\lambda_5$. Put $S:=\{\lambda\beta+\rho: \beta<\lambda_4,\ \rho\in S_1\cup S_2\}$, $R:=\{\lambda\beta+\rho:\beta<\lambda_4,\ \rho\in S_3\cup S_4\}$ and $P:=\la S,R\ra$. Using Theorem~\ref{mainlemma}, we construct an iteration candidate $\tbf\in\Lambda^{\lambda_2}_P$ of length $\delta_*$ such that, for $\xi=\lambda_5\beta+\rho$ with $\beta<\lambda_4$ and $\rho<\lambda_5$:
   \begin{enumerate}[(1)]
     \item if $\rho\in S_1$ then $\Qnm_\xi=\Loc^{N_\xi}$ (localization forcing) where $N_\xi$, in $W_{\lambda_5\beta}=W^{\Por_{\lambda_5\beta}}$, is a transitive model of (a large fragment of) ZFC of size ${<}\lambda_1$;
     \item if $\rho\in S_2$ then $\Qnm_\xi=\Dor^{N_\xi}$ (Hechler forcing) where $N_\xi$, in $W_{\lambda_5\beta}$, is a transitive model of ZFC of size ${<}\lambda_2$;
     \item if $\rho\in S_3$ then $\Por'_\xi\lessdot\Por_\xi$ has size ${<}\lambda_3$ and $\Qnm_\xi=\Bor^{W^{\Por'_\xi}}$; and
     \item if $\rho\in S_4$ then $\Por'_\xi\lessdot\Por_\xi$ has size ${\leq}\lambda_4^{\aleph_0}$ and $\Qnm_\xi=\tilde{\Eor}^{W^{\Por'_\xi}}$.
   \end{enumerate}
   Furthermore, the models $N_\xi$ and the subposets $\Por'_\xi$ are constructed such that, for any $\beta<\lambda_4$:
   \begin{enumerate}[(1')]
     \item if $F\in W_{\lambda_5\beta}$ is a subset of $\omega^\omega$ of size ${<}\lambda_1$, then there is some $N_\xi$ with $\xi=\lambda_5\beta+\rho$ (as above) such that $\rho\in S_1$ and $F\subseteq N_\xi$;
     \item if $F\in W_{\lambda_5\beta}$ is a subset of $\omega^\omega$ of size ${<}\lambda_2$, then there is some $N_\xi$ with $\xi=\lambda_5\beta+\rho$ such that $\rho\in S_2$ and $F\subseteq N_\xi$;
     \item if $\Awf\in W_{\lambda_5\beta}$ is a family of Borel null sets of size ${<}\lambda_3$, then there is some $\xi=\lambda_5\beta+\rho$ such that $\rho\in S_3$ and all the members of $\Awf$ are coded in $W^{\Por'_\xi}$;
     \item if $F\in W_{\lambda_5\beta}$ is a subset of $\omega^\omega$ of size ${<}\lambda_4$, then there is some $\xi=\lambda_5\beta+\rho$ such that $\rho\in S_4$ and $F\subseteq W^{\Por'_\xi}$.
   \end{enumerate}
   Using standard counting arguments and Theorem~\ref{mainlemma}, we can construct such a $\tbf$.
   We prove that $\Por:=\Cor_{\lambda_5}\ast\Por_{\tbf,\delta_*}$ is as required. For (a), we show that the first $\theta$-many Cohen reals form a strongly $\theta$-$\Rbf_i$-unbounded family, in the $\Por$-extension, for any regular $\lambda_i\leq\theta\leq\lambda_5$. The case $i=0$ is quite obvious; when $i=1,3$, $\Por$ is $\lambda_i$-$\Rbf_i$-good (see Lemmas~\ref{smallgood} and~\ref{Etildeprop}, and Example~\ref{ExmInv}), so the desired family is strongly unbounded by Theorem~\ref{ItPres}; when $i=2$, see Theorems~\ref{mainlemma} and~\ref{FrKnasterpresunb}, and Corollary~\ref{FrIt}.

   For (b), since the iteration has cofinality $\lambda_4$, it adds a cofinal sequence of Cohen reals of length $\lambda_4$, which is a strongly $\lambda_4$-$\Rbf_4$-unbounded family. Now, if $\lambda_4^{\aleph_0}<\lambda_5$, then all the iterands are posets of size $\leq\lambda_4^{\aleph_0}$, hence $\Por$ is $\big(\lambda_4^{\aleph_0}\big)^{+}$-$\Rbf_4$-good (see Lemma~\ref{smallgood}). Thus, for any regular $\lambda_4^{\aleph_0}<\theta\leq\lambda_5$, the first $\theta$-many Cohen reals form a strongly $\theta$-$\Rbf_4$-unbounded family by Theorem~\ref{ItPres}.

   We finish with (c). The case $i=0$ is again trivial (witnessed by the family of all the reals in the final extension). The other cases are similar, we show the case $i=2$ as an example. By (2) and (2'), it is easy to see that $\{\dot{d}_{\beta,\rho}:\beta<\lambda_4,\ \rho\in S_2\}$ forms a $\lambda_2$-$\Rbf_2$-dominating family where each $\dot{d}_{\beta,\rho}$ is the $\Dbf$-dominating real over $N_\xi$ (with $\xi=\lambda_5\beta+\rho$) added by $\Qnm_\xi$.
\end{proof}

\begin{theorem}\label{main2}
    If $\lambda_2^{<\lambda_2}=\lambda_2$ then the assumption $\lambda_5\leq 2^{\lambda_2}$ in the previous theorem can be omitted.
\end{theorem}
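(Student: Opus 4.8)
The plan is to derive Theorem~\ref{main2} from Theorem~\ref{main1} by first passing to a cardinal‑preserving generic extension in which $2^{\lambda_2}$ has been inflated to (at least) $\lambda_5$, so that the extra hypothesis $\lambda_5\le 2^{\lambda_2}$ of Theorem~\ref{main1} becomes automatic while every other hypothesis survives. Concretely, I would force with the poset $\mathbb{A}=\mathrm{Add}(\lambda_2,\lambda_5)$ whose conditions are the partial functions $p:\lambda_5\times\lambda_2\to 2$ with $|p|<\lambda_2$, ordered by reverse inclusion. Since $\lambda_2$ is regular and $\lambda_2=\lambda_2^{<\lambda_2}$, the poset $\mathbb{A}$ is $<\lambda_2$‑closed and $\lambda_2^+$‑cc; hence it preserves all cardinals and cofinalities, adds no new sequences of length $<\lambda_2$ (in particular, no new reals and no new $\omega$‑sequences of ordinals), has size $|\mathbb{A}|=\lambda_5^{<\lambda_2}\le\lambda_5^{<\lambda_4}=\lambda_5$, and forces $2^{\lambda_2}=\lambda_5$ (the lower bound is witnessed by the $\lambda_5$ generic columns, and the upper bound is the usual nice‑name count, using $\lambda_5^{\lambda_2}\le\lambda_5^{<\lambda_4}=\lambda_5$ in $V$).

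Next I would verify that in $V^{\mathbb{A}}$ every hypothesis of Theorem~\ref{main1} holds, now with $\lambda_5\le 2^{\lambda_2}$ for free. Regularity and uncountability of $\lambda_1\le\lambda_2<\lambda_3\le\lambda_4$ are preserved because $\mathbb{A}$ preserves cardinals and cofinalities. Since $\mathbb{A}$ adds no new countable sequences, for every cardinal $\mu$ both $[\mu]^{\aleph_0}$ and $\mu^{\aleph_0}$ are computed identically in $V$ and in $V^{\mathbb{A}}$; therefore $\lambda_2^{\aleph_0}=\lambda_2$, the cardinal $\lambda_3$ is still $\aleph_1$‑inaccessible, and the alternative $\lambda_4=\lambda_5$ or $\lambda_4^{\aleph_0}<\lambda_5$ is preserved. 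That $\lambda_5^{<\lambda_4}=\lambda_5$ remains true follows from a routine nice‑name count using that $|\mathbb{A}|=\lambda_5$, that $\mathbb{A}$ is $\lambda_2^+$‑cc, and that $\lambda_5^{<\lambda_4}=\lambda_5$ in $V$: for $\nu<\lambda_4$ every $\nu$‑sequence of ordinals below $\lambda_5$ in $V^{\mathbb{A}}$ has a nice name lying in a ground‑model set of size at most $\lambda_5$. Finally $2^{\lambda_2}=\lambda_5$ in $V^{\mathbb{A}}$, so $\lambda_5\le 2^{\lambda_2}$ holds there.

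Hence Theorem~\ref{main1}, applied inside $V^{\mathbb{A}}$, gives a finite support iteration of ccc posets $\Por\in V^{\mathbb{A}}$ forcing $\add(\Nwf)=\lambda_1$, $\bfrak=\lambda_2$, $\cov(\Nwf)=\lambda_3$, $\non(\Mwf)=\lambda_4$, and $\cov(\Mwf)=\cfrak=\lambda_5$. The desired poset is then the two‑step iteration $\mathbb{A}\ast\dot{\Por}$: being a $<\lambda_2$‑closed forcing followed by a ccc one, it preserves all cardinals and cofinalities, and since $\mathbb{A}$ already preserves cardinals, the values of $\add(\Nwf),\bfrak,\cov(\Nwf),\non(\Mwf),\cov(\Mwf),\cfrak$ computed in the final extension coincide with those in the $\Por$‑extension of $V^{\mathbb{A}}$, namely $\lambda_1,\lambda_2,\lambda_3,\lambda_4,\lambda_5,\lambda_5$.

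The point that demands the most care is the transfer of hypotheses: one must check that the mild forcing $\mathbb{A}$ genuinely retains $\lambda_5^{<\lambda_4}=\lambda_5$ — this is exactly where $\lambda_2=\lambda_2^{<\lambda_2}$ is used, since it is what makes $\mathbb{A}$ be $\lambda_2^+$‑cc of size $\lambda_5$ — and retains the $\aleph_1$‑inaccessibility of $\lambda_3$, which is immediate from $<\lambda_2$‑closedness. It is worth noting that $\mathbb{A}\ast\dot{\Por}$ is not ccc, only the composition of a $<\lambda_2$‑closed poset with a ccc one; it is, however, cardinal‑ and cofinality‑preserving, which is all the statement requires.
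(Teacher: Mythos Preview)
Your reduction is clean, but it does not prove the stated theorem. Theorem~\ref{main2} asserts that the \emph{conclusion} of Theorem~\ref{main1} holds without the extra hypothesis, and that conclusion is not merely the pattern of cardinal characteristics: it is the existence of a \emph{FS iteration of ccc posets} producing the strongly unbounded families in (a)--(b) and the dominating families in (c). Your two-step $\mathbb{A}\ast\dot\Por$ is not ccc (you acknowledge this), so you have proved a weaker statement. This is not a cosmetic defect: the very next section applies Boolean ultrapowers to ``the ccc poset constructed in Theorem~\ref{main2}'' (see Lemma~\ref{BUPpres}, which needs $\nu$-cc for $\nu<\kappa$), so the ccc-ness is indispensable for Theorem~\ref{mainba}.

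The paper's argument uses the same auxiliary forcing $\Ror=\mathrm{Fn}_{<\lambda_2}(\lambda_5\times\lambda_2,2)$ you chose, but not to precede the ccc iteration. Instead, the iteration candidate $\tbf$ is built directly in $W=V^{\Cor_{\lambda_5}}$ (so the final poset $\Cor_{\lambda_5}\ast\Por_{\tbf,\delta_*}$ is ccc over $V$), while $\Ror$ is only used \emph{on the side}: in $W'=W^{\Ror}$ the hypothesis $\lambda_5\le 2^{\lambda_2}$ holds, so Theorem~\ref{mainlemma} is available there to certify $\tbf\in\Lambda^{\lambda_2}_P$. The bridge between $W$ and $W'$ is Easton's Lemma: since $\Ror$ is ${<}\lambda_2$-closed and each $\Por_\xi$ is ccc, $\Ror$ is ${<}\lambda_2$-distributive in $W^{\Por_\xi}$, so the reals (and in particular the random/$\tilde\Eor$ forcings) computed in $W^{\Por'_\xi}$ and in $W'^{\Por'_\xi}$ coincide. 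An $\omega_1$-length back-and-forth between $W$ and $W'$ at each $R$-coordinate (using Theorem~\ref{mainlemma}(d),(e)) produces a $\Por'_\xi\in W$ for which the extended iteration lies in $\Lambda^{\lambda_2}_P$ in $W'$; the $\Fr$-Knaster property then transfers back to $W$ because the relevant witnesses are reals. In short: the missing idea is to use the $<\lambda_2$-closed forcing as a \emph{side extension} to verify goodness, not as a first step of the final iteration.
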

\begin{proof}
   Assume $\lambda_2^{<\lambda_2}=\lambda_2$ and $2^{\lambda_2}<\lambda_5$.
   Let $\Ror:=\mathrm{Fn}_{<\lambda_2}(\lambda_5\times\lambda_2,2)$. Note that $\Ror$ is ${<}\lambda_2$-closed and has $\lambda_2$-cc, so it preserves cofinalities and forces that the hypothesis of the previous theorem holds.

   Denote the ground model by $V$ and $V':=V^\Ror$. It is clear that any ccc poset in $V$ is also ccc in $V'$. On the other hand, by Easton's Lemma, if $\Qor$ is a ccc poset in $V$ then $\Ror$ is ${<}\lambda_2$-distributive in $V^{\Qor}$, hence $\omega^\omega\cap V^{\Ror\times\Qor}=\omega^\omega\cap V^\Qor$.

   In $V$, define $\delta_*$, $\la S_k:1\leq k \leq 4\ra$ and $P=\la S,R\ra$ as in the previous proof.
   In $W:=V^{\Cor_{\lambda_5}}$, by recursion, we define an iteration candidate $\tbf$ of length $\delta_*$ such that (1)--(4) and (1')--(4') of the previous proof hold, and such that $\Vdash_{\Ror^V}^W\tbf\in\Lambda^{\lambda_2}_{P}$. Though we construct $\tbf$ in $W$, Theorem~\ref{mainlemma} is used in $W':=V^{\Ror\times\Cor_{\lambda_5}}$ to ensure that $\tbf\in\Lambda^{\lambda_2}_{P}$. Let $\xi=\lambda_5\beta+\rho$ where $\beta<\lambda_4$ and $\rho<\lambda_5$, and assume that $\tbf$ has been constructed up to $\xi$. Limit steps are easy since it is enough to take the direct limit; when $\rho\in S$, we can define $\Qnm_\xi$ arbitrarily as in (1) and (2) of the previous proof; the interesting case is when $\rho\in R$.

   We show the case $\rho\in S_3$ (the case $\rho\in S_4$ is similar). Start with $\Por^0_\xi\lessdot\Por_\xi$ of size ${<}\lambda_3$ containing a small family of codes as in (3') of the previous proof. By applying Theorem~\ref{mainlemma}(d) in $W'$, there is some $\Por^1_\xi\lessdot\Por_\xi$ of size ${<}\lambda_3$ such that $\Por^0_\xi\subseteq\Por^1_\xi$ and that further iterating with $\Bor^{{W'}^{\Por^1_\xi}}$ yields an iteration candidate in $\Lambda^{\lambda_2}_P$. Recall that the reals in $W'_\xi:={W'}^{\Por_\xi}$ are the same as those in $W_\xi:=W^{\Por_\xi}$. Therefore, in $W$, since $\Ror^V$ has $\lambda_3$-cc, we can find $\Por^2_\xi\lessdot\Por_\xi$ of size ${<}\lambda_3$ such that, in $W'$, $\Por^1_\xi\subseteq\Por^2_\xi$. Repeating this argument (and taking unions at limit stages), construct a $\subseteq$-increasing sequence $\la\Por^\eta_\xi:\eta<\omega_1\ra$ of complete subposets of $\Por_\xi$ of size ${<}\lambda_3$ such that $\la\Por^{2\eta}_\xi:\eta<\omega_1\ra\in W$ and, in $W'$, when the iteration so far is extended one step by $\Bor^{{W'}^{\Por^{2\eta+1}_\xi}}$, it is in $\Lambda^{\lambda_2}_P$. Put $\Por'_\xi:=\bigcup_{\eta<\omega_1}\Por^{2\eta}_\xi$. Thus, $\Por'_\xi\lessdot\Por_\xi$ belongs to $W$, it has size ${<}\lambda_3$, and by Theorem~\ref{mainlemma}(e) the iteration extended by $\Bor^{{W'}^{\Por'_\xi}}=\Bor^{W^{\Por'_\xi}}$ (though in $W$) belongs to $\Lambda^{\lambda_2}_P$ in the model $W'$. This finishes the construction.

   The same argument to prove (a), (b) and (c) in the previous theorem works, with the exception of the strongly $\theta$-$\Dbf$-unbounded families for any regular $\theta\in[\lambda_2,\lambda_5]$. In $W'$, like in the previous proof, we have that the first $\theta$-many Cohen reals $\{ c_\alpha:\alpha<\theta\}$ (which are in $W$) form a strongly $\theta$-$\Dbf$-unbounded family in $W'_{\delta_*}$. It remains to show that the same holds in $W_{\delta_*}$. Let $y\in\omega^\omega\cap W_{\delta_*}$. In $W'_{\delta_*}$, we have that $|\{\alpha<\theta:c_\alpha\leq^* y\}|<\theta$, and since this set is in $W_{\delta_*}$, the same holds in this model.
\end{proof}

\section{Further remarks on Boolean ultrapowers}\label{SecBA}

The following result summarizes the properties of Boolean ultrapowers used in~\cite{KTT,GKS,KST} to separate cardinal characteristics, with the exception of property (g). The latter is an observation of the author.

\begin{lemma}\label{BUPpre}
   Assume that $\kappa$ is a strongly compact cardinal and that $\theta$ is a cardinal such that
   $\theta^\kappa=\theta$. Then there is a complete embedding $j:V\to M$ such that:
   \begin{enumerate}[(a)]
     \item $M$ is transitive and $M^{{<}\kappa}\subseteq M$.
     \item $\kappa=\min\{\alpha\in\mathbf{On}: j(\alpha)>\alpha\}$ (the critical point of $j$).
     \item For any cardinal $\lambda\geq\kappa$ such that either $\lambda\leq\theta$ or $\lambda^\kappa=\lambda$, we have $\max\{\lambda,\theta\}\leq j(\lambda)<\max\{\lambda,\theta\}^+$.
     \item If $a$ is a set of size ${<\kappa}$ then $j(a)=j[a]$.
     \item If $\lambda>\kappa$ and $I$ is a ${<}\lambda$-directed partial order, then $j[I]$ is cofinal in $j(I)$.
     \item If $\cf(\alpha)\neq\kappa$ then $\cf(j(\alpha))=\cf(\alpha)$.
     \item If $\Abf=\la X,Y,\subseteq\ra$ is a relational system and $D\subseteq Y$ is $\lambda$-$\Abf$-dominating, then
         \begin{enumerate}[(i)]
           \item if $\lambda<\kappa$ then $j(D)$ is $\lambda$-$j(\Abf)$-dominating;
           \item if $\kappa<\lambda$ then $j[D]$ is $\lambda$-$j(\Abf)$-dominating.
         \end{enumerate}
   \end{enumerate}
\end{lemma}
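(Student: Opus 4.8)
The statement asserts the existence of a complete embedding $j\colon V\to M$ with seven properties, all of which should be read as facts about the Boolean ultrapower of $V$ by a suitable ultrafilter on a fixed $<\kappa$-complete Boolean algebra. So the natural approach is: first fix the object, then verify the seven items. Since $\kappa$ is strongly compact, for any set $S$ there is a $\kappa$-complete fine ultrafilter on $\mathcal{P}_\kappa(S)$; equivalently, there is a $\kappa$-complete fine ideal/ultrafilter allowing us to form an elementary embedding whose range is $<\kappa$-closed. I would take $M$ to be the transitive collapse of the Boolean ultrapower $V^{\mathbb{B}}/U$ where $\mathbb{B}$ is (say) the completion of $\mathrm{Fn}_{<\kappa}(\theta,2)$ and $U$ is a $\kappa$-complete ultrafilter on $\mathbb{B}$ given by strong compactness, so that $|\mathbb{B}| = \theta^{<\kappa} = \theta$ (using $\theta^\kappa=\theta$), and $j$ the associated complete embedding. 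This is precisely the construction used in \cite{KTT,GKS,KST}, so items (a)--(f) are already established there; my job is really only to (i) point to those references for (a)--(f) and (ii) supply the proof of (g), the new observation.

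For the reductions in (a)--(f) I would just recall the standard facts: (a) $M^{<\kappa}\subseteq M$ because $U$ is $\kappa$-complete and fine; (b) the critical point is $\kappa$ because $\mathbb{B}$ is $<\kappa$-complete (smaller ordinals are "captured" by antichains of size $<\kappa$) while $\kappa$ itself is moved; (c) $j(\lambda)<\max\{\lambda,\theta\}^+$ follows from the bound $|j(\lambda)|\le |{}^{\mathbb{B}}\lambda/U|\le \lambda^{|\mathbb{B}|}=\lambda^\theta=\max\{\lambda,\theta\}$ under the stated hypothesis on $\lambda$, together with $j(\lambda)\ge\lambda$ and $j(\lambda)\ge\theta$ (the latter because $j$ is discontinuous at $\kappa\le\theta$ in the relevant way); (d) $j(a)=j[a]$ for $|a|<\kappa$ is the usual consequence of $\kappa$-completeness; (e) is the standard cofinality argument for $<\lambda$-directed orders with $\lambda>\kappa$; (f) preservation of cofinality away from $\kappa$ is also classical. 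I would present these in one compact paragraph citing \cite{KTT} (and \cite{GKS}) for the precise statements rather than reproving them.

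The real content is (g), and here the hard part is (ii), the case $\kappa<\lambda$, where we pass to $j[D]$ rather than $j(D)$ and must check it is still $\lambda$-$j(\Abf)$-dominating in $M$. For (i), when $\lambda<\kappa$: take $E\in M$ with $M\models |E|<\lambda$, $E\subseteq j(X)$; since $\lambda<\kappa$ and $M^{<\kappa}\subseteq M$, such an $E$ has a preimage of the form $j(E_0)$ with $E_0\in[X]^{<\lambda}$, by elementarity applied to $j(\Abf)$ and using (d)-type absoluteness; pick $y\in D$ dominating $E_0$ over $\Abf$, then $j(y)\in j(D)$ dominates $j(E_0)=E$ over $j(\Abf)$ by elementarity. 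For (ii): let $E\in M$ with $M\models |E|<\lambda$, $E\subseteq j(X)$; because $M^{<\kappa}\subseteq M$ and $\lambda>\kappa$ this does not immediately give a ground-model preimage, but we can still find $E_0\in[X]^{<\lambda}$ in $V$ with $E\subseteq j[E_0]$ — this is exactly where the $<\kappa$-closure (or the $\mathcal{P}_\kappa$-fineness) is used to "cover" $E$ by the $j$-image of a small set, since each element of $E$ is $j$ of a ground-model element and $|E|<\lambda$. Then choose $y\in D$ $\Abf$-dominating over $E_0$; I claim $j(y)$ is $j(\Abf)$-dominating over $j[E_0]\supseteq E$: for $x'\in E$, write $x'=j(x)$ with $x\in E_0$, so $x\sqsubset y$ holds in $V$, hence $j(x)\ j(\sqsubset)\ j(y)$ by elementarity, i.e. $x'\mathrel{j(\subseteq)}j(y)$. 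Thus $j(y)\in j[D]$ dominates $E$. The one delicate point to get right — and the step I expect to be the main obstacle — is the covering claim "$E\subseteq j[E_0]$ for some $E_0\in[X]^{<\lambda}\cap V$": it relies on $E$ being a member of $M$ of small size together with $M^{<\kappa}\subseteq M$ and the specific form of $M$ as a Boolean ultrapower (each member of $j(X)$ is represented by a $\mathbb{B}$-name, and on a small set one can amalgamate finitely/$<\kappa$-many such representations into a ground-model set), plus the bound $|\mathbb{B}|=\theta$ to keep $|E_0|$ below $\lambda$ when $\lambda>\kappa$ is regular; for singular $\lambda$ one takes a union over a cofinal sequence. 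I would write this covering lemma out carefully and then the rest of (g) is a one-line elementarity argument as above.
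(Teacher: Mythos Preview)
Your treatment of (a)--(f) is fine and matches the paper, which simply cites \cite{KTT} for the Boolean ultrapower construction (the paper uses the completion of $\mathrm{Fn}_{<\kappa}(\theta,\kappa)$ rather than $\mathrm{Fn}_{<\kappa}(\theta,2)$, but this is immaterial).

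There is, however, a genuine gap in your argument for (g). Your covering claim ``$E\subseteq j[E_0]$ for some $E_0\in[X]^{<\lambda}$'' is false as stated: elements of $j(X)$ are in general \emph{not} of the form $j(x)$ for $x\in X$ (take $X=\omega^\omega$; then $j(X)=(\omega^\omega)^M$ typically contains many reals not in $j[X]$). The assertion ``each element of $E$ is $j$ of a ground-model element'' is simply wrong, and no amount of ${<}\kappa$-closure or fineness repairs it. The same error appears in your treatment of (i): an arbitrary $E\subseteq j(X)$ with $|E|<\lambda<\kappa$ need not equal $j(E_0)$ for any $E_0$.

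The paper fixes this by working with the Boolean representation itself rather than trying to pull $E$ back into the range of $j$. For (ii): each $w\in E$ is represented in the Boolean ultrapower as a mixture, along a maximal antichain of size ${\leq}\kappa$ (by the $\kappa^+$-cc of the forcing), of ground-model elements $\langle x^w_\alpha:\alpha<\kappa\rangle$ from $X$. The set $E_0=\{x^w_\alpha:w\in E,\ \alpha<\kappa\}$ has size $|E|\cdot\kappa<\lambda$, so some $y\in D$ is $\Abf$-dominating over $E_0$. Then $j(y)$ dominates $w$, not because $w\in j[E_0]$, but because every possible value of the name for $w$ is $\sqsubset y$, so the Boolean value of ``$w\sqsubset\check y$'' is $1$ and hence $w\ j(\sqsubset)\ j(y)$ holds in $M$. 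For (i) the paper's route is even simpler: since $|E|<\lambda<\kappa$ and $M^{<\kappa}\subseteq M$, we have $E\in M$; by elementarity $M\models$ ``$j(D)$ is $\lambda$-$j(\Abf)$-dominating'', and this yields the witness directly with no preimage needed. Your side remark that the bound $|\mathbb B|=\theta$ is what keeps $|E_0|<\lambda$ is also off; what is actually used is the $\kappa^+$-cc (to bound the antichains by $\kappa$), not the global size of $\mathbb B$.
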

\begin{proof}
   Let $\Aor$ be the Boolean completion of the poset $\mathrm{Fn}_{<\kappa}(\theta,\kappa)$. As in \cite{KTT}, there is an ultrafilter on $\Aor$ such that its Boolean ultrapower yields a complete embedding $j:V\to M$ satisfying (a)--(f). We show (g) (though technicalities are referred to \cite{KTT}). Let $E\subseteq j(X)$ be a set of size ${<}\lambda$. If $\lambda<\kappa$ then $E\in M$ by (a), and since $M\models$``$j(D)$ is $\lambda$-$j(\Abf)$-dominating'', we can find a $z\in j(Y)$ such that $w\, j(\sqsubset)\, z$ for any $w\in E$. This shows that $j(D)$ is $\lambda$-$j(\Abf)$-dominating.

   Now assume $\kappa<\lambda$. Any $w\in E$ is a mixture of $\kappa$-many possibilities $\la x^w_\alpha:\alpha<\kappa\ra\in X^\kappa$. Now, since $\{x^w_\alpha: w\in E,\ \alpha<\kappa\}$ has size $<\lambda$, there is some $y\in D$ $\Abf$-dominating over this set. This implies that $j(y)$ is $j(\Abf)$-dominating over $E$, which shows that $j[D]$ is $\lambda$-$j(\Abf)$-dominating.
\end{proof}

This method of Boolean ultrapowers works to preserve and modify strong unbounded families and also certain type of dominating families. Though in the original framework from~\cite{KTT,GKS,KST} they preserve a very strong type of dominating family (the property $\mathrm{COB}$), we show in item (c) that a weaker type of dominating families is preserved.

\begin{lemma}\label{BUPpres}
   Assume that $j:V\to M$ is a complete embedding satisfying (a), (b), (d)--(g) of the previous lemma. Let $\Por$ be a $\nu$-cc poset with $\nu<\kappa$ uncountable regular, and let $\Abf=\la X,Y,\sqsubset\ra$ be a relational system where $X,Y$ are analytic (or co-analytic) subsets of some Polish space, and $\sqsubset\, \subseteq X\times Y$ is also analytic (or co-analytic). Then
   \begin{enumerate}[(a)]
     \item \emph{\cite{KTT,GKS}} $j(\Por)$ is $\nu$-cc.
     \item \emph{\cite{KTT,GKS}} If $\mu$ is regular and $\Por$ adds a strongly $\mu$-$\Abf$-unbounded family of size $\mu$, then so does $j(\Por)$.
     \item If $\Por$ adds a $\lambda$-$\Abf$-dominating family of size ${\leq}\chi$ then
       \begin{enumerate}
         \item if $\lambda<\kappa$ then $j(\Por)$ adds a $\lambda$-$\Abf$-dominating family of size ${\leq}|j(\chi)|$;
         \item if $\kappa<\lambda$ then $j(\Por)$ adds a $\lambda$-$\Abf$-dominating family of size ${\leq}|\chi|$.
       \end{enumerate}
   \end{enumerate}
\end{lemma}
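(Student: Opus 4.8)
The plan is to reduce everything to the ``pointwise'' transfer properties already collected in Lemma~\ref{BUPpre}, together with absoluteness of the analytic (or co-analytic) relation $\sqsubset$. First I would fix notation: let $\dot F$ be a $\Por$-name forced to be a $\lambda$-$\Abf$-dominating family of size ${\leq}\chi$, say enumerated as $\{\dot y_\zeta:\zeta<\chi\}$. Applying $j$, the forcing $j(\Por)$ has in $M$ a $j(\Por)$-name $j(\dot F)=\{\dot y'_{\zeta'}:\zeta'<j(\chi)\}$ which $j(\Por)$ forces (over $M$) to be $j(\lambda)$-$j(\Abf)$-dominating. The key observation is that, because $X$, $Y$ and $\sqsubset$ are (co-)analytic, the relation $x\sqsubset y$ is absolute between $V$-generic extensions and $M$-generic extensions of $j(\Por)$; moreover $j(\Abf)$ is, from the point of view of $M$, the ``same'' relational system on the reals (the parameters of the analytic definitions are fixed by $j$ up to the action of $j$ on codes, but the definition transfers). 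Since $\Por$ is $\nu$-cc with $\nu<\kappa$, by Lemma~\ref{BUPpre}(a) (stated here as (a)) $j(\Por)$ is $\nu$-cc, so in particular every real of the $j(\Por)$-extension is added by a sub-poset of size $<\nu<\kappa$, and is captured by a $\nu$-cc complete subposet; this is what lets us run a mixture/reflection argument exactly as in Lemma~\ref{BUPpre}(g).

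For part (c)(1), where $\lambda<\kappa$: here $j(\lambda)=\lambda$ by Lemma~\ref{BUPpre}(c) and the definition of the critical point (since $\lambda<\kappa$). So in $M$, $j(\Por)$ forces $j(\dot F)$ to be $\lambda$-$j(\Abf)$-dominating of size ${\leq}j(\chi)$. Now I would argue that a $j(\Por)$-name in $M$ for such a family is also, in $V$, a $j(\Por)$-name for a $\lambda$-$\Abf$-dominating family: let $G$ be $j(\Por)$-generic over $V$ (hence over $M$), let $E\in[X\cap V[G]]^{<\lambda}$; since $\lambda<\kappa\leq\nu$-closure... more precisely since $M^{<\kappa}\subseteq M$ in $V$ (property (a) of Lemma~\ref{BUPpre}) and $|E|<\lambda<\kappa$, a name for $E$ can be taken in $M$ (using $\nu$-cc to bound the antichains), so $E\in M[G]$; then $M[G]$ has some $y\in j(\dot F)[G]$ with $x\,j(\sqsubset)\,y$ for all $x\in E$, and by analytic absoluteness this $y$ actually $\Abf$-dominates $E$ in $V[G]$. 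Hence $j(\dot F)$ is forced to be $\lambda$-$\Abf$-dominating, and its size is ${\leq}|j(\chi)|$ (computed in $V$), giving (c)(1).

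For part (c)(2), where $\kappa<\lambda$: now we pass to the pointwise image $j[\dot F]=\{\dot y_\zeta':\zeta<\chi\}$ where $\dot y_\zeta'$ is, in the $j(\Por)$-extension, ``$j$ applied to'' the $\Por$-name $\dot y_\zeta$ — concretely, since $\dot y_\zeta$ is a $\Por$-name for a real, it is essentially a function from an antichain of $\Por$ into $\omega$ (or a countable structure), and $j$ of it is a $j(\Por)$-name for a real obtained as a mixture of $\kappa$-many copies, exactly as in the proof of Lemma~\ref{BUPpre}(g). The claim is that $j[\dot F]$ (size ${\leq}|\chi|$) is forced by $j(\Por)$ to be $\lambda$-$\Abf$-dominating. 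Given $G$ generic and $E\in[X\cap V[G]]^{<\lambda}$: each $w\in E$ is, by $\nu$-cc-ness and $\nu<\kappa$, a real added by some small complete subposet, so $w$ is realized as a mixture of ${<}\kappa$-many possibilities $\la x^w_\alpha:\alpha<\kappa\ra$ each lying in $X\cap V$ (read off ground-model names); the set $\{x^w_\alpha:w\in E,\ \alpha<\kappa\}$ has size ${<}\lambda$ because $\kappa<\lambda$ and $|E|<\lambda$ with $\lambda$ regular (which we may assume, as in the statement). In $V[G_\Por]$ for the corresponding $\Por$-generic below, there is $y=\dot y_\zeta$ in $\dot F$ that $\Abf$-dominates this whole set; then $j(y)=\dot y_\zeta'$ evaluated in $V[G]$ is $j(\sqsubset)$-above each $x^w_\alpha$, hence (re-assembling the mixture and using analytic absoluteness of $\sqsubset$) it is $\sqsubset$-above each $w\in E$. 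So $j[\dot F]$ is $\lambda$-$\Abf$-dominating of size ${\leq}|\chi|$, proving (c)(2).

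The main obstacle I anticipate is \textbf{bookkeeping the absoluteness of $\sqsubset$ across the three models} $V[G]$, $M[G]$, and the intermediate $\Por$-extension: one must check that the analytic (or co-analytic) definition of $\sqsubset$ — with its real parameter — is genuinely the one computed by $j(\Abf)$ in $M$, i.e. that $j$ fixes the relevant Borel code up to the trivial action, and that $\Sigma^1_2$/$\Pi^1_2$ absoluteness (Shoenfield) applies between these forcing extensions. The mixture argument for names of reals (writing $j(\dot y_\zeta)$ as a $j(\Por)$-name) is routine once one recalls the Boolean ultrapower construction from \cite{KTT}, so I would reference that rather than reproduce it; similarly the passage ``$E\in M[G]$'' uses only $M^{<\kappa}\subseteq M$ plus $\nu$-cc and should be stated briefly. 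Everything else is the same pointwise transfer already done in Lemma~\ref{BUPpre}(g), now carried out inside a forcing extension.
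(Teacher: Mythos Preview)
Your overall strategy is sound and lands in the same place as the paper, but the paper takes a noticeably cleaner route that sidesteps the bookkeeping you worry about. Instead of working inside generic extensions, the paper defines an auxiliary relational system $\Abf^{*}=\langle X^{*},Y^{*},\sqsubset^{*}\rangle$ in $V$, where $X^{*}$ (resp.\ $Y^{*}$) is the set of nice $\Por$-names for members of $X$ (resp.\ $Y$) and $\dot x\sqsubset^{*}\dot y$ iff $\Vdash_{\Por}\dot x\sqsubset\dot y$. The analytic/co-analytic hypothesis is used once, to see that $j(X^{*})$ is exactly the set of nice $j(\Por)$-names for members of $X$ and that $\dot u\, j(\sqsubset^{*})\,\dot v$ iff $\Vdash_{j(\Por)}\dot u\sqsubset\dot v$. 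Then the hypothesis on $\dot D$ translates to ``$\dot D$ is $\lambda$-$\Abf^{*}$-dominating in $V$'', and Lemma~\ref{BUPpre}(g) applies \emph{verbatim} to $\Abf^{*}$: case (i) gives (c)(1) and case (ii) gives (c)(2), with no further work. All the absoluteness and mixture issues are thereby packaged into the single identification $j(\Abf^{*})\leftrightarrow(\Vdash_{j(\Por)}\cdot\sqsubset\cdot)$, and you never touch a generic filter.

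Your direct argument can be made to work, but note a slip in your sketch of (c)(2): the ``possibilities'' $x^{w}_{\alpha}$ arising from the Boolean-ultrapower mixture are $\Por$-\emph{names} (elements of $X^{*}$), not reals in $X\cap V$, and there is no canonical ``$\Por$-generic below'' a $j(\Por)$-generic $G$ in which to evaluate them. The domination you need is $\Vdash_{\Por}\dot x^{w}_{\alpha}\sqsubset\dot y_{\zeta}$ for a single ground-model $\zeta$, i.e.\ precisely $\Abf^{*}$-domination; once you phrase it that way you have recovered the paper's argument.
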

\begin{proof}
   Property (a) is immediate from Lemma~\ref{BUPpre}(a). For (b), see e.g. \cite{GKMS}. We show (c). Let $X^*$ be the set of nice $\Por$-names of members of $X$, and set $Y^*$ likewise. Note that $j(X^*)$ coincides with the set of nice $j(\Por)$-names of members of $X$, likewise for $j(Y^*)$ (by Lemma~\ref{BUPpre}(a)). On the other hand, $j[X^*]$ is equal to the set of nice $j(\Por)$-names of members of $j[X]$, likewise for $j[Y^*]$. For $\dot{x}\in X^*$ and $\dot{y}\in Y^*$, define $\dot x\sqsubset^* \dot y$ iff $\Vdash_\Por \dot x\sqsubset \dot y$, and set $\Abf^*:=\la X^*,Y^*,\sqsubset\ra$. Note that, for $\dot u\in j(X^*)$ and $\dot v\in j(Y^*)$, $\dot u\, j(\sqsubset^*)\, \dot v$ iff $\Vdash_{j(\Por)}\dot u \sqsubset\dot v$.

   Assume that $\dot{D}=\{\dot{y}_\eta : \eta<\chi\}$ is forced by $\Por$ to be a $\lambda$-$\Abf$-dominating family. It is easy to see that $\dot{D}$ is $\lambda$-$\Abf^*$-dominating. Hence, Lemma~\ref{BUPpre}(g) applies: if $\lambda<\kappa$ then $j(\dot{D})$ is $\lambda$-$j(\Abf^*)$-dominating, which means that $j(\Por)$ forces that $j(\dot{D})$ is $\lambda$-$\Abf$-dominating (of size ${\leq}|j(\chi)|$); if $\kappa<\lambda$ then $\{j(\dot{y}_\eta):\eta<\chi\}$ is $\lambda$-$j(\Abf^*)$-dominating, which means that $j(\Por)$ forces that $\{j(\dot{y}_\eta):\eta<\chi\}$ is $\lambda$-$\Abf$-dominating (of size ${\leq}|\chi|$).
\end{proof}

\begin{theorem}
  If $\aleph_1<\kappa_9<\lambda_1<\kappa_8<\lambda_2<\kappa_7<\lambda_3<\kappa_6<\lambda_4
  \leq\lambda_5<\lambda_6<\lambda_7<\lambda_8<\lambda_9$ such that
    \begin{enumerate}[(i)]
        \item for $j=6,7,8,9$, $\kappa_j$ is strongly compact and $\lambda_j^{\kappa_j}=\lambda_j$,
        \item $\lambda_i$ is regular for $i\neq 5,9$,
        \item $\lambda_2^{<\lambda_2}=\lambda_2$, $\lambda_4^{\aleph_0}=\lambda_4$, $\lambda_5^{<\lambda_4}=\lambda_5$, and
        \item $\lambda_3$ is $\aleph_1$-inaccessible,
    \end{enumerate}
    then there is a ccc poset that forces
\begin{multline*}
    \add(\Nwf)=\lambda_1,\ \mathfrak{b}=\lambda_2,\ \cov(\Nwf)=\lambda_3,\ \non(\Mwf)=\lambda_4,\\
        \cov(\Mwf)=\lambda_5,\ \non(\Nwf)=\lambda_6,\ \mathfrak{d}=\lambda_7,\ \cof(\Nwf)=\lambda_8,\text{\ and }\mathfrak{c}=\lambda_9.
\end{multline*}
\end{theorem}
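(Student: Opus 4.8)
The plan is to combine Theorem~\ref{main1}/\ref{main2} (which handles the left-hand cardinals $\lambda_1,\ldots,\lambda_5$ via a FS iteration of good iteration candidates) with the Boolean ultrapower method of Lemmas~\ref{BUPpre} and~\ref{BUPpres}, applied successively for the four strongly compact cardinals $\kappa_6,\kappa_7,\kappa_8,\kappa_9$, to blow up $\non(\Nwf)$, $\dfrak$, $\cof(\Nwf)$ and $\cfrak$ to $\lambda_6,\lambda_7,\lambda_8,\lambda_9$ respectively. First I would fix the relational systems that compute the right-hand side: $\non(\Nwf)=\bfrak(\la\Nwf,2^\omega,\not\ni\ra^\perp)$ and $\cof(\Nwf)=\dfrak(\Lc(\omega,\Hwf_*))$ (via $\bfrak(\Lc)=\add(\Nwf)$, $\dfrak(\Lc)=\cof(\Nwf)$), $\dfrak=\dfrak(\Dbf)$, and $\cfrak=\dfrak(\Ct)$. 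Note each of these relational systems is analytic/co-analytic on a Polish space, so Lemma~\ref{BUPpres} applies.

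The construction proceeds as follows. Start with the ground model (satisfying the stated arithmetic), and first apply the Boolean ultrapower $j_6:V\to M_6$ for $\kappa_6$ with parameter $\theta=\lambda_6$ (using $\lambda_6^{\kappa_6}=\lambda_6$), so that $j_6(\lambda_4)=\lambda_4$ and $j_6(\lambda_5)=\lambda_5$ but $j_6$ sends the ``size of the continuum parameter'' up to around $\lambda_6$; then inside $M_6$ apply $j_7$ for $\kappa_7$ with parameter $\lambda_7$, then $j_8$ with $\lambda_8$, then $j_9$ with $\lambda_9$. Composing, one gets a complete embedding $j:V\to M$ and, in $M$, the poset $\Por$ obtained from Theorem~\ref{main1}/\ref{main2} becomes $j(\Por)$, which by Lemma~\ref{BUPpres}(a) is still ccc. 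The point is that $j(\Por)$ forces the left-hand equalities unchanged — because each $\kappa_j>\lambda_5$ so $j$ fixes $\lambda_1,\ldots,\lambda_5$ and, by Lemma~\ref{BUPpres}(b), the strongly $\theta$-$\Rbf_i$-unbounded families of size $\theta$ ($i<5$) produced by $\Por$ are preserved; hence $\add(\Nwf)=\lambda_1$, $\bfrak=\lambda_2$, $\cov(\Nwf)=\lambda_3$, $\non(\Mwf)=\lambda_4$, $\cov(\Mwf)=\lambda_5$ hold in the $j(\Por)$-extension. Simultaneously, for each right-hand cardinal I would track the $\lambda_i$-dominating family of size $\lambda_5$ (item (c) of Theorem~\ref{main1}) for the appropriate relational system and push it through Lemma~\ref{BUPpres}(c): since each blown-up cardinal $\lambda_j$ satisfies $\kappa_j<\lambda_j$ (and $\kappa_j$ is below the relevant dominating-family threshold), clause (2) of \ref{BUPpres}(c) keeps the dominating family of size $\leq\lambda_5<\lambda_j$, while the strongly unbounded side gives the matching lower bound after the ultrapower enlarges the continuum. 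Combining with the $\bfrak(\Rbf)\leq\theta$, $|\delta|\leq\dfrak(\Rbf)$ bookkeeping from Theorem~\ref{ItPres} and the Tukey computations in Example~\ref{ExmInv}, one reads off $\non(\Nwf)=\lambda_6$, $\dfrak=\lambda_7$, $\cof(\Nwf)=\lambda_8$, $\cfrak=\lambda_9$.

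The main obstacle is the careful ordering and parameter-choice of the four successive Boolean ultrapowers so that (a) each $j_i$ fixes everything already set up below $\kappa_i$ — which uses Lemma~\ref{BUPpre}(b),(c) and the $\aleph_1$-inaccessibility of $\lambda_3$ only where it is genuinely needed (inside the small-poset construction, already absorbed into Theorem~\ref{main1}) — and (b) the dominating families for $\non(\Nwf),\dfrak,\cof(\Nwf),\cfrak$ survive all four applications with the right cardinalities, which forces one to verify that at the $j_i$-stage the target cardinal $\lambda_i$ really sits above $\kappa_i$ (so clause (2) of \ref{BUPpres}(c) applies) and that the earlier ultrapowers did not already collapse the relevant gap. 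One also must check that $j(\Por)$ still has the form of an iteration satisfying the hypotheses needed for the strongly-unbounded-family arguments (this is exactly Lemma~\ref{BUPpres}(b), applied relational-system by relational-system). The remaining verifications — that the final model satisfies the displayed chain of ten equalities — are then a routine assembly of Remark~\ref{RemRelSys}, Theorem~\ref{ItPres}, and the Tukey facts already catalogued, together with the observation from the Remark after Theorem~\ref{mainba} that strict inequalities on the right may be replaced by equalities (so the case $\lambda_4\leq\lambda_5$ and the possibly-equal right-side cardinals cause no extra trouble).
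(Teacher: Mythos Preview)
Your overall strategy---start from the poset of Theorem~\ref{main2} and apply four successive Boolean ultrapowers, tracking unbounded and dominating families via Lemma~\ref{BUPpres}---is exactly the paper's approach. But you have misread the hypothesis in a way that makes your stated mechanism wrong. You write ``each $\kappa_j>\lambda_5$'' and conclude ``$j_6(\lambda_4)=\lambda_4$ and $j_6(\lambda_5)=\lambda_5$''. In fact the hypothesis says
\[
\kappa_9<\lambda_1<\kappa_8<\lambda_2<\kappa_7<\lambda_3<\kappa_6<\lambda_4\leq\lambda_5,
\]
so every $\kappa_j$ lies \emph{below} $\lambda_5$, and in particular $\kappa_6<\lambda_4$ is the critical point of $j_6$, whence $j_6(\lambda_4)\neq\lambda_4$ (by Lemma~\ref{BUPpre}(c), $|j_6(\lambda_4)|=\lambda_6$). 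The composite embedding does not fix $\lambda_1,\ldots,\lambda_5$; for instance $j_9$ has critical point $\kappa_9<\lambda_1$ and moves all of them.

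This interleaving is not an accident but the entire engine of the argument. When you apply $j_6$ (critical point $\kappa_6$ between $\lambda_3$ and $\lambda_4$, parameter $\theta=\lambda_6$), the two clauses of Lemma~\ref{BUPpres}(c) split the dominating families: for $i\leq 3$ one has $\lambda_i<\kappa_6$, so clause~(i) applies and the $\lambda_i$-$\Rbf_i$-dominating family is inflated to size $|j_6(\lambda_5)|=\lambda_6$; for $i=4$ one has $\lambda_4>\kappa_6$, so clause~(ii) applies and the $\lambda_4$-$\Mg$-dominating family keeps size $\leq\lambda_5$. This is precisely what pushes $\non(\Nwf),\dfrak,\cof(\Nwf),\cfrak$ up to $\lambda_6$ while leaving $\cov(\Mwf)\leq\lambda_5$. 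At the next stage $j_7$ (critical point $\kappa_7$ between $\lambda_2$ and $\lambda_3$, parameter $\lambda_7$) the split moves: now $\lambda_3>\kappa_7$, so the $\lambda_3$-$\Slm$-dominating family keeps its size $\lambda_6$ (pinning $\non(\Nwf)\leq\lambda_6$), while the $\lambda_1$- and $\lambda_2$-dominating families inflate to $\lambda_7$; and so on. Your claim that ``clause (2) of~\ref{BUPpres}(c) keeps the dominating family of size $\leq\lambda_5$'' at every stage is therefore false, and without the alternating use of both clauses there is no way to separate $\lambda_6,\lambda_7,\lambda_8,\lambda_9$ from one another. The preservation of the left-side equalities comes not from $j$ fixing those cardinals, but from Lemma~\ref{BUPpres}(b) (strongly $\mu$-unbounded families survive for regular $\mu$) together with the correct clause of~(c) at the appropriate stage.
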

\begin{proof}
   Use the ccc poset constructed in Theorem~\ref{main2} and apply Boolean ultrapowers accordingly as in \cite{KST}. We omit all the details, but we remark that, instead of property $\mathrm{COB}$, we can just preserve or modify dominating families thanks to Lemma~\ref{BUPpres}(c).
\end{proof}

\begin{remark}
  In~\cite{KST}, also for the corresponding results in~\cite{KTT,GKS,BCM}, the authors put quite an effort to construct (strong) dominating families satisfying $\mathrm{COB}$ through the iteration. But thanks to Lemma~\ref{BUPpres}(c), we can work with a weaker and more natural type of dominating family, so the construction becomes as simple as in Theorem~\ref{main1}.
\end{remark}

\subsection*{Acknowledgements}
This work was supported by the Grant-in-Aid for Early Career Scientists 18K13448, Japan Society for the Promotion of Science.

This paper was developed for the conference proceedings corresponding to the Set Theory Workshop that Professor Hiroaki Minami organized in November 2018. The author is very thankful to Professor Minami for letting him participate in such wonderful workshop.

The author also thanks Martin Goldstern and Jakob Kellner for valuable discussions.

{\footnotesize
\bibliography{bibli}
\bibliographystyle{alpha}
}

\Addresses


\end{document}